\documentclass{article} 
\usepackage{iclr2027_conference,times}

\usepackage{amsmath,amsfonts,bm}

\def\eqref#1{equation~\ref{#1}}

\def\1{\bm{1}}

\DeclareMathAlphabet{\mathsfit}{\encodingdefault}{\sfdefault}{m}{sl}
\SetMathAlphabet{\mathsfit}{bold}{\encodingdefault}{\sfdefault}{bx}{n}

\usepackage{hyperref}
\usepackage{url}
\usepackage{amsthm}
\usepackage{algorithm}
\usepackage{algorithmic}
\usepackage{graphicx}
\usepackage{wrapfig}
\usepackage{mathrsfs}
\usepackage{booktabs}
\usepackage{multirow}
\usepackage{pifont}
\usepackage{listings}
\usepackage{lstautogobble}
\usepackage[scaled=0.92]{inconsolata}

\usepackage{xcolor}
\usepackage{colortbl}
\definecolor{lapandahighlight}{RGB}{255,238,238}

\lstdefinestyle{almCode}{
	basicstyle=\fontfamily{zi4}\selectfont\footnotesize,
	columns=fullflexible,
	keepspaces=true,
	tabsize=2,
	autogobble=true,
	showstringspaces=false,
	showtabs=false,
	showspaces=false,
	breaklines=true,
	breakatwhitespace=false,
	breakindent=0pt,
	breakautoindent=true,
	frame=single,
	framerule=0.35pt,
	rulecolor=\color{black!20},
	backgroundcolor=\color{black!2},
	xleftmargin=0.5em,
	xrightmargin=0.3em,
	framexleftmargin=0.3em,
	aboveskip=0.7em,
	belowskip=0.7em,
	captionpos=b,
	keywordstyle=\color{blue!60!black},
	commentstyle=\color{green!40!black},
	stringstyle=\color{red!50!black}
}

\lstdefinestyle{almPython}{
	style=almCode,
	language=Python,
	emph={SX,sym,CasadiProblem,build_solver,solve_alm,export_c_project},
	emphstyle=\color{blue!65!black}
}

\lstdefinestyle{almMatlab}{
	style=almCode,
	language=Matlab,
	emph={SX,sym,lapanda_create_solver,solve_alm},
	emphstyle=\color{blue!65!black}
}

\lstdefinestyle{almC}{
	style=almCode,
	language=C
}

\newcommand{\cmark}{\textcolor{green!60!black}{\ding{51}}}
\newcommand{\xmark}{\textcolor{red!75!black}{\ding{55}}}

\newtheorem{assumption}{Assumption}

\newtheorem{theorem}{Theorem}

\newtheorem{remark}{Remark}
\newcommand{\solver}[1]{\texttt{#1}}

\title{lapanda: A Matrix-Free Differentiable Solver for Nonconvex Constrained Optimization Layers}

\author{
\textbf{Yuankun Chen$^{1}$, Zifei Nie$^{1,*}$, Kangyu Lin$^{2}$,
J\'{a}n Drgo\v{n}a$^{3}$, Liang Wu$^{3}$} \\
\normalfont\normalsize
$^{1}$School of Artificial Intelligence, Jilin University, China \\
$^{2}$Graduate School of Informatics, Kyoto University, JPN \\
$^{3}$Department of Civil and Systems Engineering, Johns Hopkins University, USA
}

\makeatletter
\renewcommand{\@maketitle}{%
\vbox{%
\hsize\textwidth
\centering
\makebox[\textwidth][c]{%
\parbox{1.15\textwidth}{\centering\LARGE\bfseries \@title\par}%
}
\vskip 0.22in
{\large \@author\par}
\vskip 0.28in
}}
\makeatother

\iclrfinalcopy
\begin{document}

\maketitle
\lhead{}
\renewcommand{\headrulewidth}{0pt}
\begingroup
\renewcommand{\thefootnote}{*}
\footnotetext{Corresponding author (e-mail: zifei\_nie@jlu.edu.cn).}
\endgroup
\vspace{-4mm}
\begin{abstract}
Differentiable optimization brings the structural guarantees of mathematical optimization to network pipelines, allowing them to be trained end-to-end. 
However, its application remains challenging for nonconvex constrained problems, as existing differentiable solvers often suffer from limited modeling expressiveness due to their reliance on specialized problem structures, while also incurring substantial computation time and memory overhead in both the forward and backward passes.
To address these challenges, we propose \solver{lapanda}, a matrix-free differentiable solver for nonconvex optimization with general constraints.
It reformulates the problem to a sequence of augmented Lagrangian subproblems, each handled by a first-order inner solver through a proximal averaged quasi-Newton algorithm with adaptive linesearch, thus enabling efficient forward optimization. We establish local well-posedness of the solution map and convergence of the outer iterations, and further derive a sensitivity alignment between the original problem and the final subproblem in the backward pass, demonstrating that the subproblem sensitivity, which can be computed efficiently in a matrix-free manner, provides a principled approximation to the exact optimizer sensitivity.
We evaluate \solver{lapanda} on nonconvex constrained Rosenbrock benchmarks, imitation learning with several representative constrained optimal control problems, and embedded robotic obstacle-avoidance tasks. Compared with state-of-the-art differentiable solvers, \solver{lapanda} delivers substantial reductions in computation time and memory footprint while maintaining reliable constraint satisfaction and learning performance.

\noindent Code: \url{https://github.com/optiXlab1/lapanda}
\end{abstract}

\vspace{-8mm}
\section{Introduction}
\vspace{-3mm}

Differentiable optimization serves as a bridge between optimization and
machine learning, enabling structural priors and constraints to be embedded into learning frameworks~\citep{kotary2021endtoend}.
This paradigm has been applied to a wide range of learning and control problems and safety-critical applications, such as learning personalized driving behaviors from demonstrations~\citep{acerbo2022mpcil,acerbo2024drividoc}, integrating model-based planning into end-to-end autonomous driving systems~\citep{huang2024dtpp,huang2024dipp,karkus2023diffstack}, and combining  reinforcement learning with optimal control for robotic systems~\citep{romero2024actorcritic,wan2024difftori,xu2024leto}.

Despite its promise, differentiable optimization faces two practical
bottlenecks.
First, many existing differentiable solvers gain computational efficiency by tailoring their formulations to specific problem structures, including convexification~\citep{agrawal2019cvxpylayers}, least-squares objectives~\citep{pineda2022theseus}, and temporal structure in optimal control problems (OCPs)~\citep{amos2018differentiable}.
Second, both optimization and sensitivity computation typically rely on explicit KKT matrix construction and factorization, resulting in substantial computational and memory overhead~\citep{blondel2022efficient}. These bottlenecks are particularly restrictive for general nonconvex
constrained problems with limited resources.
We therefore propose \solver{lapanda}, a matrix-free differentiable solver for nonconvex optimization layers with general constraints. The contributions are summarized as follows:

\begin{itemize}
	\vspace{-2mm}
	\item \textbf{Efficient Optimization.} To enable efficient optimization under general constraints, \solver{lapanda} combines the augmented Lagrangian method (ALM) with a first-order \solver{PANDA}  solver. ALM handles the constraints, while \solver{PANDA} takes care of the resulting subproblems. We theoretically proved that the forward solution is locally well-posed and convergent.
	\item \textbf{Matrix-Free Differentiation.}
	To differentiate through the optimizer without constructing derivative matrices, we analyzed the alignment between the differential KKT systems of the original problem and the last ALM subproblem, then derived an error bound for approximating the exact solution sensitivity by the one of the final ALM subproblem. This leads to a backward pass based only on matrix-vector product operators.
	\item \textbf{Comprehensive Evaluation.}
	To evaluate \solver{lapanda} from multiple perspectives, we considered three
	classes of optimization problems with distinct structures: nonconvex constrained
	Rosenbrock benchmarks, imitation learning via parametric constrained OCP, and embedded mobile
	robot obstacle-avoidance tasks. The results show that \solver{lapanda} reliably completes the tasks while achieving competitive computational and memory efficiency.
	\item \textbf{Cross-Platform Software.}
	To facilitate practical deployment, we developed a C-based solver core with user-friendly Python and MATLAB interfaces, together with engineering optimizations that reduce cross-language interface overhead.

\end{itemize}
\vspace{-5mm}
\section{Related Work}
\vspace{-2mm}
\label{sec:related_work}

Since \solver{OptNet} introduced the idea of embedding a parametric optimization problem as a differentiable layer into a learning architecture~\citep{amos2017optnet}, a growing number of differentiable optimization solvers have been crafted. Many of these methods target specific problem classes. Examples include differentiable quadratic programming layers such as \solver{qpth} and ADMM-based QP layers~\citep{amos2017optnet,butler2023efficient}, disciplined convex and conic optimization layers such as \solver{CVXPYLayers} and \solver{diffcp}~\citep{agrawal2019cvxpylayers,agrawal2019diffcp}, and nonlinear least-squares layers such as \solver{Theseus}~\citep{pineda2022theseus}. Despite their effectiveness, these specialized formulations make them less suitable for general nonconvex constrained optimization problems.

More general differentiable optimization frameworks are also available. For general nonlinear programs, \solver{CasADi} combines interior-point optimization with active-set KKT differentiation~\citep{andersson2019casadi,andersson2018casadi_sensitivity}. For general constrained OCPs, \solver{acados} combines structure-exploiting SQP with smoothed interior-point sensitivities~\citep{verschueren2022acados,frey2025differentiable_nmpc}, while \solver{SafePDP} computes sensitivities through a differentiable Pontryagin method~\citep{jin2021safepdp}. More recently, \solver{TurboMPC} exploits GPU parallelism to accelerate differentiable model predictive control (MPC)~\citep{bravopalacios2026turbompc}. However, despite their broad applicability, these frameworks often explicitly construct and factorize matrices arising from KKT systems, imposing considerable computational and memory overhead, especially on resource-limited platforms.

A complementary line of work reduces this second-order burden. \solver{FFOLayer} approximates hypergradients using only first-order information for convex programs, but requires additional forward optimization~\citep{zhao2025ffolayer}. Building on the forward-backward envelope framework~\citep{stella2017simple,themelis2018forward}, \solver{PANDA} employs first-order iterations with adaptive stepsizes in the forward pass and matrix-free Krylov methods based on matrix-vector products in the backward pass, but supports only constraints that admit efficient proximal mappings~\citep{chen2026panda}.

Building on these ideas, \solver{lapanda} extends the \solver{PANDA} framework to general constraints. While prior work has combined ALM with proximal-type methods for forward optimization~\citep{pas2022alpaqa}, we further establish local well-posedness and convergence for this formulation and exploit the subproblem structure to derive the sensitivity alignment and efficient matrix-free differentiation. Table~\ref{tab:related_solver_comparison} highlights the unique positioning of \solver{lapanda} among representative differentiable solvers.
\vspace{-4mm}
\begin{table}[h]
	\centering
	\caption{Comparison of representative differentiable optimization solvers.}
	\label{tab:related_solver_comparison}
	
	\resizebox{0.92\columnwidth}{!}{
		\begin{tabular}{lcccc}
			\toprule
			Solver
			& Target problem
			& Hard constraints
			& Matrix-free
			& Platform \\
			\midrule
			
			\rowcolor{lapandahighlight}
			lapanda (Ours)
			& {\color{green!60!black}General NLP}
			& {\color{green!60!black}General}
			& \cmark
			& {\color{green!60!black}C / MATLAB / Python} \\
			
			CasADi (2019)
			& {\color{green!60!black}General NLP}
			& {\color{green!60!black}General}
			& \xmark
			& {\color{green!60!black}C / MATLAB / Python} \\
			
			acados (2025)
			& {\color{blue!70!black}General OCP}
			& {\color{green!60!black}General}
			& \xmark
			& {\color{green!60!black}C / MATLAB / Python} \\
			
			TurboMPC (2026)
			& {\color{blue!70!black}General OCP}
			& {\color{green!60!black}General}
			& \xmark
			& Python \\
			
			SafePDP (2021)
			& {\color{blue!70!black}General OCP}
			& {\color{green!60!black}General}
			& \xmark
			& Python \\
			
			PANDA (2026)
			& Composite NLP
			& Prox.-friendly
			& \cmark
			& MATLAB \\
			
			FFOLayer (2026)
			& Convex
			& Convex
			& \cmark
			& Python \\
			
			mpc.pytorch (2018)
			& OCP
			& Box
			& \xmark
			& Python \\
			
			Theseus (2022)
			& NLS
			& --
			& \xmark
			& Python \\
			
			qpth (2017)
			& Convex QP
			& Convex
			& \xmark
			& Python \\
			
			CVXPYLayers (2019)
			& Convex
			& Convex
			& \xmark
			& Python \\
			
			\bottomrule
		\end{tabular}
	}
\end{table}

\section{Problem Formulation}
\vspace{-2mm}
We consider a parametric constrained optimization problem of the form
\begin{equation}
	\begin{aligned}
		u^\star(\theta) \in
		\underset{u\in\mathbf{U}}{\operatorname{argmin}}
		\quad & \ell(u,\theta) \\
		\mathrm{s.t.}\quad
		& c(u,\theta)=0,\\
		& h(u,\theta)\le 0,
	\end{aligned}
	\tag{P}
	\label{eq:problem_p}
\end{equation}
where \(u\in\mathbb{R}^{n_u}\) is the decision variable and 
\(\theta\in\mathbb{R}^{n_\theta}\) denotes the learnable parameter.
The objective
\(\ell:\mathbb R^{n_u}\times\mathbb R^{n_\theta}\to\mathbb R\)
and the constraint functions
\(c:\mathbb R^{n_u}\times\mathbb R^{n_\theta}\to\mathbb R^{n_c}\) and
\(h:\mathbb R^{n_u}\times\mathbb R^{n_\theta}\to\mathbb R^{n_h}\)
are smooth and possibly nonconvex; \(c\) and \(h\) represent equality and
inequality constraints, respectively.
The set \(\mathbf U\) represents a simple constraint set, such as a box,
whose proximal mapping can be evaluated efficiently.

In differentiable optimization, the \textbf{\emph{forward pass}} corresponds to solving
(\ref{eq:problem_p}) for a given parameter \(\theta\), yielding an optimizer
\(u^\star(\theta)\). In many learning problems, \(\theta\) must also be adjusted
so that the resulting optimizer minimizes a smooth task-level objective \(\Phi(\theta)\). This leads to the outer learning problem
\begin{equation}
	\min_{\theta}
	\quad
		\Phi(\theta)
		:=
		\mathcal{L}_{\mathrm{out}}\big(u^\star(\theta),\theta\big),
	\label{eq:outer_problem}
\end{equation}
where \(\mathcal{L}_{\mathrm{out}}\) can be an imitation loss or performance loss.
Differentiating the outer objective yields the chain rule as follows
\begin{equation}
	\nabla_\theta \Phi(\theta)
	=
		\nabla_u \mathcal{L}_{\mathrm{out}}\big(u^\star(\theta),\theta\big)
	\frac{\partial u^\star(\theta)}{\partial \theta}
	+
		\nabla_\theta \mathcal{L}_{\mathrm{out}}\big(u^\star(\theta),\theta\big).
	\label{eq:outer_gradient_direct}
\end{equation}
The main computational challenge in evaluating Eq.~(\ref{eq:outer_gradient_direct}) lies in the optimizer sensitivity
$
\frac{\partial u^\star(\theta)}{\partial \theta},
$
whose efficient computation constitutes the \textbf{\emph{backward pass}} of (\ref{eq:problem_p}).
\vspace{-2mm}
\paragraph{Optimal control problems.}
Constrained optimal control problems are common in the control of autonomous systems.
A typical discretized finite-horizon OCP has the form
\begin{equation}
	\begin{aligned}
		\xi^\star_\theta
		=
		\{x^\star_{0:N},u^\star_{0:N-1}\}
		\in
		\underset{\{x_k,u_k\}}{\operatorname{argmin}}
		\quad &
		\sum_{k=0}^{N-1}
		\ell_k(x_k,u_k;\theta)
		+
		\ell_N(x_N;\theta)
		\\
		\mathrm{s.t.}\quad
		&
		x_0=x_{\mathrm{init}}(\theta),
		\\
		&
		x_{k+1}=f_k(x_k,u_k;\theta),
		\\
		&
		c_k(x_k,u_k;\theta)=0,
		\quad
		h_k(x_k,u_k;\theta)\le 0,
		\\
		&
		c_N(x_N;\theta)=0,
		\quad
		h_N(x_N;\theta)\le 0,
		\\
		&
		u_k\in\mathbf{U}_k,
		\quad k=0,\dots,N-1 .
	\end{aligned}
	\tag{OCP}
	\label{eq:ocp}
\end{equation}
Here, \(x_k\in\mathbb{R}^{n_x}\) and \(u_k\in\mathbb{R}^{n_u}\) denote the state and control, respectively, and
\(f_k:\mathbb{R}^{n_x}\times\mathbb{R}^{n_u}\times\mathbb{R}^{n_\theta}\to\mathbb{R}^{n_x}\)
denotes the dynamics. The functions \(\ell_k\), \(c_k\) and \(h_k\) are the stage-wise counterparts of
\(\ell\), \(c\) and \(h\) in problem~(\ref{eq:problem_p}),
with \(\ell_N\), \(c_N\), and \(h_N\) denoting their terminal counterparts.
While (\ref{eq:ocp}) naturally has a stage-wise dynamic structure, a single-shooting transcription~\citep{gros2024numerical} recursively eliminates the states as decision variables, yielding a control-only constrained optimization problem in the generic form~(\ref{eq:problem_p}). This allows \solver{lapanda} to be applied directly. We therefore develop the solver using (\ref{eq:problem_p}) and defer the transcription details to Appendix~A.

\vspace{-3mm}
\section{lapanda}
\vspace{-2mm}
\begin{wrapfigure}{r}{0.6\linewidth}
	\vspace{-6mm}
	\centering
	\includegraphics[
	width=0.9\linewidth,
	trim={0mm 0cm 0mm 0cm},
	clip
	]{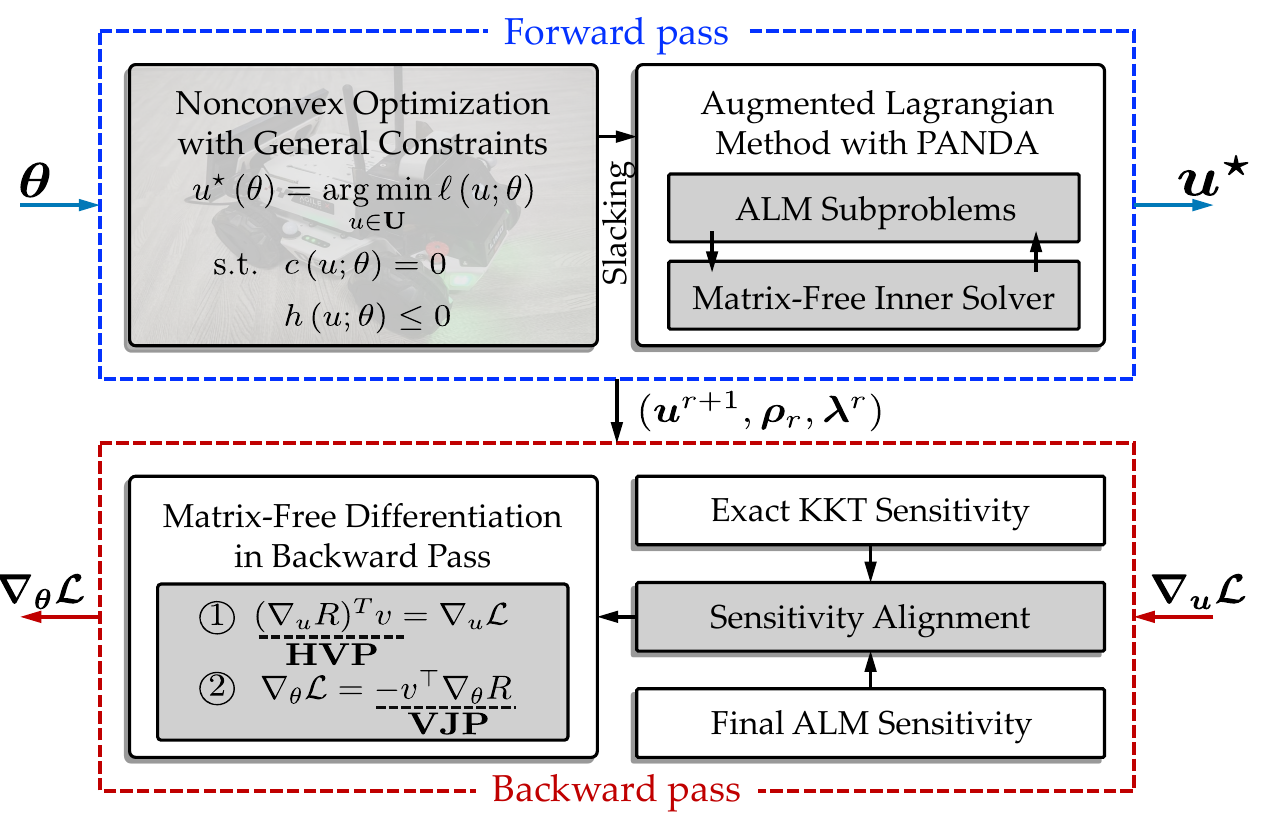}
	\vspace{-5mm}
	\caption{Overview of the lapanda framework.}
	\vspace{-10mm}
	\label{fig:overview}
\end{wrapfigure}

This section details the algorithmic principle of \solver{lapanda}, as shown in
Figure~\ref{fig:overview}. The \textbf{\emph{forward pass}} is detailed in
Section~\ref{subsec:forward_pass}, where the nonconvex constrained problem is reformulated
into ALM subproblems and reduced to a composite form handled by
\solver{PANDA}. In Section~\ref{subsec:backward_pass}, we derive the
\textbf{\emph{backward pass}} by comparing the sensitivity systems of the
original problem and the final ALM subproblem, thereby motivating a matrix-free
approximate implementation.

\subsection{Forward Pass: Augmented Lagrangian with PANDA}
\label{subsec:forward_pass}
\vspace{-2mm}
We now describe the forward pass of \solver{lapanda} for solving the
parameterized problem~(\ref{eq:problem_p}).
To make the following calculation and analysis well defined, we impose the
standing assumptions below.

\begin{assumption}
	\label{ass:local_regular}
	Let \(u^\star\) be a local solution of problem~(\ref{eq:problem_p}). 
	We assume that:
	\vspace{-2mm}
	\begin{itemize}
		\item[A1.] The functions $\ell(u,\theta)$, $c(u,\theta)$, and $h(u,\theta)$ are
		continuously differentiable and their derivatives are
		locally Lipschitz continuous. The simple set $\mathbf U$ is nonempty and closed.
		\item[A2.] The functions
		\(\ell(u,\theta)\), \(c(u,\theta)\), and \(h(u,\theta)\) are \(C^{2,1}\)
		(twice continuously differentiable with locally Lipschitz second derivatives) in a
		neighborhood of \(u^\star\). The set \(\mathbf U\) admits a local \(C^{2,1}\)
		active-constraint representation near \(u^\star\). With this representation,
		the full constrained problem satisfies the linear independence constraint qualification (LICQ), the second-order sufficient condition (SOSC), and strict complementarity at \(u^\star\).
	\end{itemize}
\end{assumption}

\vspace{-4mm}
\paragraph{Slack reformulation and augmented Lagrangian.}
Firstly, we rewrite the general equality and inequality constraints in a
unified set-membership form by defining
\[
G(u,\theta):=
\begin{bmatrix}
	c(u,\theta)\\
	h(u,\theta)
\end{bmatrix},
\qquad
\mathcal D:=\{0\}^{n_c}\times\mathbb R_-^{n_h}.
\]
Then the constraints in~(\ref{eq:problem_p}) can be compactly written as
\(G(u,\theta)\in\mathcal D\). By introducing an auxiliary slack variable
\(z\in\mathcal D\), problem~(\ref{eq:problem_p}) is equivalently written as
\begin{equation}
	\begin{aligned}
		\min_{u\in\mathbf U,\ z\in\mathcal D}
		\quad & \ell(u,\theta)\\
		\mathrm{s.t.}\quad
		& G(u,\theta)-z=0 .
	\end{aligned}
	\label{eq:p_alm}
\end{equation}
With a penalty parameter \(\rho>0\) and multiplier
\(\lambda\in\mathbb R^{n_c+n_h}\), its augmented
Lagrangian is given as
\begin{equation}
	\mathcal L_{\rho}(u,z,\lambda;\theta)
	=
	\ell(u,\theta)
	+
	\lambda^\top\big(G(u,\theta)-z\big)
	+
	\frac{\rho}{2}
	\|G(u,\theta)-z\|^2 .
	\label{eq:alm_lagrangian}
\end{equation}
At the \(r\)-th ALM iteration, given \((\lambda_r,\rho_r)\), the corresponding augmented Lagrangian subproblem reads
\begin{equation}
	\begin{aligned}
		(u_{r+1},z_{r+1})
		\approx
		\underset{u\in\mathbf{U},\;z\in\mathcal D}{\operatorname{argmin}}
		\mathcal L_{\rho_r}(u,z,\lambda_r;\theta).
	\end{aligned}
	\tag{P-sub}
	\label{eq:alm_subproblem}
\end{equation}
Then, the ALM forward pass proceeds as follows: (i) Solve~(\ref{eq:alm_subproblem}); (ii) Update the multiplier using the resulting feasibility residual; and (iii) Increase the penalty parameter only when the residual fails to decrease sufficiently. The complete calculation steps are summarized in Algorithm~\ref{alg:alm_forward}.

\begin{algorithm}[t]
	\caption{Forward pass of lapanda}
	\label{alg:alm_forward}
	\begin{algorithmic}[1]
		\STATE \textbf{Input:} parameter \(\theta\), initial primal variable \(u_0\),
		multiplier \(\lambda_0\), penalty \(\rho_0\), constants
		\(\kappa_{\mathrm{dec}}\in(0,1)\), \(\tau_\rho>1\), tolerances
		\(\varepsilon_{\mathrm{ALM}}>0\), and maximum number of ALM iterations
		\(R_{\max}\in\mathbb N_+\)
		\STATE Set \(z_0=\Pi_{\mathcal D}(G(u_0,\theta)+\rho_0^{-1}\lambda_0)\) and \(s_0=G(u_0,\theta)-z_0\)
		\FOR{\(r=0,1,\dots,R_{\max}-1\)}
		\STATE Solve the ALM subproblem
		\(
		(u_{r+1},z_{r+1})\approx
		\arg\min_{u\in\mathbf U,\ z\in\mathcal D}
		\mathcal L_{\rho_r}(u,z,\lambda_r;\theta)
		\)
		\STATE Evaluate \(s_{r+1}=G(u_{r+1},\theta)-z_{r+1}\)
		\STATE Update \(\lambda_{r+1}=\lambda_r+\rho_r s_{r+1}\)
		\STATE Set \(\rho_{r+1}=\rho_r\) if \(\|s_{r+1}\|\le \kappa_{\mathrm{dec}}\|s_r\|\), and \(\rho_{r+1}=\tau_\rho\rho_r\) otherwise
		\IF{\(\|s_{r+1}\|_\infty
			\leq \varepsilon_{\mathrm{ALM}}\)}
		\STATE \textbf{break}
		\ENDIF
		\ENDFOR
		\STATE \textbf{Return:} \(u_{r+1}\approx u^\star(\theta)\)
	\end{algorithmic}
\end{algorithm}

\begin{remark}[Inner subproblem solved by PANDA]
	\label{rem:inner_subproblem}
	The main computational burden in Algorithm~\ref{alg:alm_forward} arises from
	solving the ALM subproblem. Following the reduction in \citet{pas2022alpaqa},
	we eliminate \(z\) before invoking the inner solver, yielding a reduced problem only in \(u\)
	\begin{equation}
		u_{r+1}
		\approx
		\underset{u\in\mathbf U}{\operatorname{argmin}}
		\psi_{\rho_r}(u;\theta,\lambda_r),
		\label{eq:reduced_alm_subproblem}
	\end{equation}
	where
	\begin{equation}
		\psi_{\rho}(u;\theta,\lambda)
		:=
		\ell(u,\theta)
		+
		\frac{\rho}{2}
		\operatorname{dist}^2
		\left(
		G(u,\theta)+\rho^{-1}\lambda,
		\mathcal D
		\right).
		\label{eq:reduced_alm_objective}
	\end{equation}
	Since \(\mathbf U\) typically represents simple control-input constraints, often in the form of box constraints, this reduced problem is equivalently written as
	the smooth-plus-nonsmooth composite problem
	\begin{equation}
		\min_u
		\quad
		\psi_{\rho_r}(u;\theta,\lambda_r)
		+
		\delta_{\mathbf U}(u),
		\label{eq:panda_composite_subproblem}
	\end{equation}
	which can be handled efficiently in a matrix-free manner by \solver{PANDA}.
	The \(z\)-elimination derivation and the inner \solver{PANDA} procedure are
	given in Appendix~\ref{app:alm_reduction} and
	Appendix~\ref{app:panda_composite_subproblem}, respectively.
\end{remark}

\begin{remark}[Warm start]
	\label{rem:warm_starting}
	Warm-starting plays a key role in the computational efficiency of \solver{lapanda}.
	The initial primal variable, multiplier, and penalty parameter affect both the
	number of ALM outer iterations and the conditioning of the subproblem.
	An overly small penalty may require repeated penalty increases, whereas an overly large
	penalty can make the inner problem ill-conditioned. Thus, warm-starting with information from previous
	solves is an effective strategy.
\end{remark}
For the forward pass computation described above, we draw on local augmented
Lagrangian theory~\citep{bertsekas1999nonlinear,nocedal2006numerical} to
establish the following local well-posedness and convergence result.

\begin{theorem}[Local well-posedness and convergence]
	\label{thm:lapanda_wellposed}
	Suppose Assumption~\ref{ass:local_regular} holds at a local solution
	\((u^\star,z^\star,\lambda^\star)\) of the slack formulation Eq.~(\ref{eq:p_alm}). Then there exist
	positive constants \(\bar\rho\), \(\delta\), \(\epsilon\), and \(M\) such
	that, for any \(\lambda_r\) and \(\rho_r\) satisfying
	$\|\lambda_r-\lambda^\star\|\le \rho_r\delta$ and $\rho_r\ge\bar\rho$, 
	the following statements hold:
	\begin{itemize}
		\vspace{-2mm}
		\item[(i)] The slack ALM subproblem~(\ref{eq:alm_subproblem}) is locally well defined. More precisely, it admits a unique local solution \((u_{r+1},z_{r+1})\) in the \(\epsilon\)-neighborhood of \((u^\star,z^\star)\). Moreover, the subproblem satisfies LICQ and SOSC at \((u_{r+1},z_{r+1})\).
		
		\vspace{-2mm}
		\item[(ii)] With the multiplier update in
		Algorithm~\ref{alg:alm_forward}, the local primal-dual estimates
		satisfy
		\begin{equation}
			\|(u_{r+1},z_{r+1})-(u^\star,z^\star)\|
			\le
			M\frac{\|\lambda_r-\lambda^\star\|}{\rho_r},
			\qquad
			\|\lambda_{r+1}-\lambda^\star\|
			\le
			M\frac{\|\lambda_r-\lambda^\star\|}{\rho_r}.
			\label{eq:alm_local_estimates_main}
		\end{equation}
		
		\vspace{-2mm}
		\item[(iii)] After eliminating \(z\), the reduced composite
		subproblem (\ref{eq:panda_composite_subproblem}) satisfies the local
		\solver{PANDA} regularity conditions at \(u_{r+1}\). In particular,
		\(u_{r+1}\) is a strong local minimizer, and
		\(\delta_{\mathbf U}\) is locally prox-regular relative to the
		identified active manifold. Consequently, the \solver{PANDA} inner iteration
		is locally well posed and converges locally to \(u_{r+1}\).
	\end{itemize}
\end{theorem}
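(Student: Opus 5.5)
The plan is to reduce everything to the classical local convergence theorem for the method of multipliers (Bertsekas, Prop.~4.2.3; Nocedal--Wright, Thm.~17.6), in its implicit-function-theorem form.

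\textbf{Step 1: regularity of the slack problem.} Write the active constraints of \(\mathbf U\) through the local \(C^{2,1}\) representation from A2, say \(q(u)=0\), and the active components of \(\mathcal D\) as \(z_i=0\) for equality indices and active inequality indices. Check that LICQ, SOSC and strict complementarity of the original problem~(\ref{eq:problem_p}) at \(u^\star\) transfer to the slack formulation Eq.~(\ref{eq:p_alm}) at \((u^\star,z^\star)\). For LICQ, the constraint Jacobian has block form \([\nabla_u G,\,-I;\,\nabla q,\,0;\,0,\,E_{\mathcal A}]\), where \(E_{\mathcal A}\) selects the active slack coordinates. Eliminating \(z\) recovers exactly the active Jacobian of~(\ref{eq:problem_p}). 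For SOSC, the critical cone in \((d_u,d_z)\) forces \(d_z=\nabla_u G\,d_u\) and \(d_{z,\mathcal A}=0\), and the Hessian of the Lagrangian is zero in \(z\), so the curvature reduces to the original one. Strict complementarity carries over because the multiplier of \(z\in\mathcal D\) equals \(\lambda^\star\).

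\textbf{Step 2: parts (i) and (ii).} On the local manifold determined by the strictly complementary active set, the subproblem~(\ref{eq:alm_subproblem}) becomes an equality-constrained problem for \(\mathbf U\)'s active part together with the penalized coupling \(G-z=0\). Following Bertsekas, introduce \(t=1/\rho\) and \(\mu=\lambda_r\), and write the first-order system
\[
\nabla\ell+\nabla G^\top\tilde\lambda+\nabla q^\top\nu=0,\qquad G-z-t(\tilde\lambda-\mu)=0,
\]
plus the slack stationarity conditions. At \(t=0\), \(\mu=\lambda^\star\) this system has the solution \((u^\star,z^\star,\lambda^\star)\), and its Jacobian is nonsingular by LICQ and SOSC from Step 1.

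Rather than applying the implicit function theorem at the single point \(t=0\), \(\mu=\lambda^\star\), I would apply it uniformly over the compact set \(\{(\mu,t):\|\mu-\lambda^\star\|\le\delta,\ 0\le t\le 1/\bar\rho\}\), using the rescaling \(\mu=\lambda^\star+s/t\) with \(\|s\|\le\delta\). This uniformity is exactly how the hypothesis \(\|\lambda_r-\lambda^\star\|\le\rho_r\delta\) enters. The resulting solution is Lipschitz in \((s,t)\), with value \((u^\star,z^\star,\lambda^\star)\) when \(s=0\). That gives the estimates~(\ref{eq:alm_local_estimates_main}), since \(\tilde\lambda=\lambda_r+\rho_r s_{r+1}=\lambda_{r+1}\) is precisely the multiplier update in Algorithm~\ref{alg:alm_forward}.

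Three further facts follow from continuity:
\begin{itemize}
\item Strict complementarity persists for \(\epsilon\) small, so the identified active set is correct and the point is a genuine local minimizer of the set-constrained subproblem.
\item The Hessian of \(\mathcal L_\rho\) restricted to the critical subspace stays positive definite, via a Debreu/Finsler-type argument for large \(\rho\). This gives SOSC.
\item LICQ holds because the subproblem constraints (\(\mathbf U\) and \(\mathcal D\) separately) have decoupled gradients.
\end{itemize}
Uniqueness within the \(\epsilon\)-ball comes from the local uniqueness in the implicit function theorem together with SOSC.

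\textbf{Step 3: part (iii).} Eliminating \(z\) via \(z=\Pi_{\mathcal D}(G+\rho^{-1}\lambda)\) gives \(\psi_\rho\). Because strict complementarity holds near \(u_{r+1}\), the projection onto \(\mathcal D\) is locally a fixed linear projection. Hence \(\psi_\rho\) is locally \(C^{2,1}\), and its Hessian on the tangent space of \(\mathbf U\)'s active manifold is the Schur-complement reduction of the slack Hessian, which is positive definite by Step 2. This makes \(u_{r+1}\) a strong local minimizer.

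Prox-regularity of \(\delta_{\mathbf U}\) follows from the \(C^{2,1}\) active representation: the set is locally a \(C^2\) manifold intersected with strictly complementary inequalities, and such sets are prox-regular and partly smooth. With these conditions in hand, the local convergence result of \solver{PANDA} \citep{chen2026panda} applies directly.

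\textbf{Main obstacle.} I expect the hardest part to be making Step 2 uniform with respect to the unbounded ratio \(\lambda_r\) versus \(\rho_r\), that is, obtaining the constant \(M\) independent of \(r\) while simultaneously keeping the active set identified. I would first try Bertsekas' compactness argument on the rescaled variables \((s,t)\) and check that its proof goes through with the additional nonsmooth set \(\mathbf U\) replaced by its local manifold representation.
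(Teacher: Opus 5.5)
Your proposal is correct, and for (i)--(ii) it is the paper's own argument. The paper uses the same Bertsekas rescaling: its $t=\rho_r^{-1}(\lambda_r-\lambda^\star)$ and $\kappa=\rho_r^{-1}$ are your $s$ and $t$. It notes that $(y^\star,\lambda^\star,\nu^\star)$ solves the system at $t=0$ for every $\kappa\in[0,\bar\rho^{-1}]$. It proves the Jacobian with the $-\kappa I$ block is nonsingular along that whole segment, using the Finsler bound $W^\star+\rho J_r^{\star\top}J_r^\star\succ0$ on $\ker J_b^\star$. It then reads off $M$ from uniformly bounded derivatives of the implicit functions.

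Two small repairs are needed.
\begin{itemize}
\item The compactness argument runs over the base segment $\{0\}\times[0,\bar\rho^{-1}]$ in the $(s,t)$ variables, not over $\|\mu-\lambda^\star\|\le\delta$. It also needs the Jacobian to be nonsingular on that whole segment, not only at $t=0$ as your write-up states.
\item Passing from implicit-function uniqueness, which holds in primal--dual space, to uniqueness in a primal $\epsilon$-ball uses LICQ, not SOSC. LICQ makes the multipliers of any stationary point $y$ of the subproblem near $y^\star$ continuous in $y$. They are therefore close to $(\lambda^\star,\nu^\star)$, even though $\lambda_r+\rho_r r(y)$ carries the large factor $\rho_r$, and this places the point inside the implicit-function neighborhood.
\end{itemize}

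The genuine differences are elsewhere.

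Your Step 1 checks that LICQ, SOSC and strict complementarity pass from (P) to the slack problem. The paper takes this for granted by stating these conditions directly at $(u^\star,z^\star)$, so your check is correct and fills a small hole.

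In (iii), the paper gets strong minimality by substituting $z=z_{\rho_r}(u)$ into the quadratic-growth inequality of the slack subproblem; this needs no smoothness of $\psi_{\rho_r}$. You instead use strict complementarity to make $\Pi_{\mathcal D}$ locally a fixed linear map. Then $\psi_{\rho_r}$ is locally $C^{2,1}$ and its reduced Hessian is a Schur complement of the slack Hessian. Your route is slightly longer, but it proves the local smoothness of $\psi_{\rho_r}$ that the paper only asserts before invoking the \solver{PANDA} theory.

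For prox-regularity, the paper derives an explicit Taylor inequality from the active representation. You instead appeal to the standard prox-regularity of sets cut out by $C^2$ constraints under LICQ; the two arguments are equivalent in substance.
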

\vspace{-2mm}
\begin{proof}
	See Appendix~\ref{app:convergence}.
\end{proof}

\vspace{-4mm}
\subsection{Backward Pass: Sensitivity Alignment and Matrix-Free Differentiation}
\label{subsec:backward_pass}
\vspace{-2mm}
This subsection derives the backward pass of \solver{lapanda} through sensitivity alignment and matrix-free differentiation. As shown in the
outer-gradient expression Eq.~(\ref{eq:outer_gradient_direct}), the key
computational quantity is the solution sensitivity
\(D_\theta u^\star\). We first characterize this sensitivity through the
optimality conditions of the original constrained problem.
\vspace{-2mm}
\paragraph{Exact sensitivity from the original KKT system.}
Under Assumption~\ref{ass:local_regular}, strict complementarity implies local
active-set identification. Let \(\mathcal A_h\) be the active set of
\(h(u,\theta)\le0\) at \(u^\star\). We write the locally active smooth
constraints as
\begin{equation}
	a(u,\theta):=
	\begin{bmatrix}
		c(u,\theta)\\
		h_{\mathcal A_h}(u,\theta)
	\end{bmatrix},
	\qquad
	\phi(u)=0,
	\label{eq:active_manifold_constraints}
\end{equation}
where \(\phi(u)=0\) represents the identified active manifold of the simple set
\(\mathbf U\). 

Let \(A=\nabla_u a(u^\star,\theta)\),
\(C=\nabla_u\phi(u^\star)\), and let \(\eta^\star,\mu^\star\) be the
corresponding multipliers. The local active-set KKT system of Problem~(\ref{eq:problem_p}) is
\begin{equation}
	\nabla_u\ell(u^\star,\theta)+A^\top\eta^\star+C^\top\mu^\star=0,\qquad
	a(u^\star,\theta)=0,\qquad
	\phi(u^\star)=0.
		\label{eq:kkt_conditions}
\end{equation}
Applying the implicit function theorem to Eq.~(\ref{eq:kkt_conditions}) and differentiating with respect to \(\theta\) gives
\begin{equation}
	\label{eq:original_kkt_sensitivity}
	\begin{bmatrix}
		H & A^\top & C^\top\\
		A & 0 & 0\\
		C & 0 & 0
	\end{bmatrix}
	\begin{bmatrix}
		D_\theta u^\star\\
		D_\theta \eta^\star\\
		D_\theta \mu^\star
	\end{bmatrix}
	=
	-
	\begin{bmatrix}
		B\\
		D\\
		0
	\end{bmatrix},
\end{equation}
where
$
H:=\nabla_{uu}^2(\ell+\eta^{\star\top}a+\mu^{\star\top}\phi),\quad
B:=\nabla_{u\theta}^2(\ell+\eta^{\star\top}a+\mu^{\star\top}\phi),
$
and
$ D:=\nabla_\theta a. $

Eq.~(\ref{eq:original_kkt_sensitivity}) characterizes the exact sensitivity required by the backward pass. Instead of explicitly forming and solving this saddle-point system, which would incur substantial memory overhead, we align it with the optimality conditions of the last ALM subproblem. Such alignment exposes the same composite structure exploited by \solver{PANDA} in the forward solution pass, thereby enabling a matrix-free backward pass with substantially reduced memory requirements.

\paragraph{Sensitivity alignment through the final ALM subproblem.}
For the final ALM subproblem~(\ref{eq:alm_subproblem}), let
\(\eta_r\) denotes the components of the incoming ALM multiplier
\(\lambda_r\) associated with the equality and locally active inequality
constraints. Define the corresponding shifted multiplier as
$
\bar\eta_{r+1}
:=
\eta_r
+
\rho_r a(u_{r+1},\theta).
$
The local optimality conditions of the final reduced
\solver{lapanda} subproblem are
\begin{equation}
	\begin{aligned}
		\nabla_u\ell(u_{r+1},\theta)
		+A_{r+1}^\top\bar\eta_{r+1}
		+C_{r+1}^\top\mu_{r+1}
		=0,
		a(u_{r+1},\theta)
		-\rho_r^{-1}
		\left(\bar\eta_{r+1}-\eta_r\right)
		=0,
		\phi(u_{r+1})=0,
	\end{aligned}
	\label{eq:alm_kkt}
\end{equation}
where \(A_{r+1}:=\nabla_u a(u_{r+1},\theta)\) and
\(C_{r+1}:=\nabla_u\phi(u_{r+1})\).

When differentiating the final ALM subproblem, the incoming multiplier
\(\eta_r\) and penalty parameter \(\rho_r\) are treated as fixed.
Differentiating Eq.~(\ref{eq:alm_kkt}) therefore gives
\begin{equation}
	\label{eq:lapanda_sensitivity}
	\begin{bmatrix}
		H_{r+1} & A_{r+1}^\top & C_{r+1}^\top\\
		A_{r+1} & -\rho_r^{-1}I & 0\\
		C_{r+1} & 0 & 0
	\end{bmatrix}
	\begin{bmatrix}
		D_\theta u_{r+1}\\
		D_\theta\bar\eta_{r+1}\\
		D_\theta\mu_{r+1}
	\end{bmatrix}
	=
	-
	\begin{bmatrix}
		B_{r+1}\\
		D_{r+1}\\
		0
	\end{bmatrix},
\end{equation}
where
$
H_{r+1}
:=
\nabla_{uu}^2
\left(
\ell+\bar\eta_{r+1}^\top a+\mu_{r+1}^\top\phi
\right),
B_{r+1}
:=
\nabla_{u\theta}^2
\left(
\ell+\bar\eta_{r+1}^\top a+\mu_{r+1}^\top\phi
\right)
$
and
$
D_{r+1}:=\nabla_\theta a,
$
with all derivatives evaluated at \(u_{r+1}\).
The detailed derivation of Eqs.~(\ref{eq:alm_kkt})--(\ref{eq:lapanda_sensitivity})
is provided in Appendix~\ref{app:alm_sensitivity_derivation}.
Comparing Eq.~(\ref{eq:lapanda_sensitivity}) with
Eq.~(\ref{eq:original_kkt_sensitivity}), the two systems differ only through the ALM regularization block \(-\rho_r^{-1}I\) and the finite-iterate mismatch from the original primal-dual solution. This motivates the following theory.

\begin{theorem}[Sensitivity alignment]
	\label{thm:sensitivity_alignment}
	Suppose the assumptions of
	Theorem~\ref{thm:lapanda_wellposed} hold. Define
	\[
	\epsilon_{r+1}
	:=
	\|u_{r+1}-u^\star\|
	+
	\|\bar\eta_{r+1}-\eta^\star\|
	+
	\|\mu_{r+1}-\mu^\star\|.
	\]
	Then the sensitivity obtained from Eq.~(\ref{eq:lapanda_sensitivity}) satisfies
	\begin{equation}
		D_\theta u_{r+1}
		=
		D_\theta u^\star
		+
		O(\rho_r^{-1})
		+
		O(\epsilon_{r+1}).
		\label{eq:sensitivity_error}
	\end{equation}
	If the ALM outer iterates converge to the local primal-dual solution, then
	\(\epsilon_{r+1}\to0\), and the remaining discrepancy is
	\(O(\rho_r^{-1})\). Moreover, if \(\rho_r\to\infty\), then
	\(D_\theta u_{r+1}\to D_\theta u^\star\).
\end{theorem}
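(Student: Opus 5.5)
The plan is a perturbation argument for linear systems. Write the original sensitivity system Eq.~(\ref{eq:original_kkt_sensitivity}) as $K^\star X^\star = -R^\star$, with $K^\star$ the saddle-point matrix and $R^\star=[B;D;0]$. Write the subproblem system Eq.~(\ref{eq:lapanda_sensitivity}) as $K_{r+1}X_{r+1}=-R_{r+1}$. The proof then has three steps: show $K^\star$ is nonsingular, bound $\|K_{r+1}-K^\star\|$ and $\|R_{r+1}-R^\star\|$, and conclude with the standard inverse-perturbation lemma.

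\textbf{Step 1 (invertibility of $K^\star$).} Under Assumption~\ref{ass:local_regular}, LICQ gives that $[A;C]$ has full row rank. SOSC with strict complementarity gives that $H$ is positive definite on $\ker A\cap\ker C$. The classical saddle-point lemma then shows $K^\star$ is nonsingular, so $\|(K^\star)^{-1}\|=:\kappa<\infty$. This also justifies the implicit-function derivation of Eq.~(\ref{eq:original_kkt_sensitivity}).

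\textbf{Step 2 (splitting the perturbation).} Decompose $K_{r+1}-K^\star = E_\rho + E_\epsilon$. The regularization part is $E_\rho=\mathrm{diag}(0,-\rho_r^{-1}I,0)$, with $\|E_\rho\|=\rho_r^{-1}$. The mismatch part $E_\epsilon$ collects $H_{r+1}-H$, $A_{r+1}-A$ and $C_{r+1}-C$. Here $H_{r+1}$ and $H$ are Hessians of the Lagrangian evaluated at $(u_{r+1},\bar\eta_{r+1},\mu_{r+1})$ and $(u^\star,\eta^\star,\mu^\star)$ respectively. Because the Lagrangian is affine in the multipliers, and second derivatives are locally Lipschitz by A2 (with $\nabla a$ and $\nabla\phi$ locally Lipschitz), we get $\|E_\epsilon\|\le L\epsilon_{r+1}$ with $L$ uniform on the $\epsilon$-neighborhood from Theorem~\ref{thm:lapanda_wellposed}(i). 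The same reasoning applied to $B_{r+1}$, $D_{r+1}$ and $\nabla_\theta$ gives $\|R_{r+1}-R^\star\|\le L'\epsilon_{r+1}$. One point needs care: these bounds require the active set (i.e.\ the partition into $a$ and $\phi$) identified at $u_{r+1}$ to coincide with that at $u^\star$. I would obtain this from strict complementarity combined with the local estimates of Theorem~\ref{thm:lapanda_wellposed}(ii), which keep $u_{r+1}$ and the multipliers close enough that active constraints keep strictly signed multipliers and inactive constraints stay strictly feasible, after enlarging $\bar\rho$ if necessary. Making this step rigorous, in particular with respect to the multiplier shift $\bar\eta_{r+1}=\eta_r+\rho_r a(u_{r+1},\theta)$ as opposed to the $\lambda_{r+1}$ of Algorithm~\ref{alg:alm_forward}, is the part I expect to be the main obstacle.

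\textbf{Step 3 (conclusion).} For $\rho_r\ge\bar\rho$ with $\bar\rho$ large and $\epsilon_{r+1}$ small, we have $\|(K^\star)^{-1}(E_\rho+E_\epsilon)\|\le\tfrac12$. Then $K_{r+1}$ is invertible with $\|K_{r+1}^{-1}\|\le 2\kappa$. Subtracting the two systems gives
\[
X_{r+1}-X^\star = -K_{r+1}^{-1}\big[(E_\rho+E_\epsilon)X^\star + (R_{r+1}-R^\star)\big],
\]
so that
\[
\|X_{r+1}-X^\star\|\le 2\kappa\big(\rho_r^{-1}\|X^\star\| + (L\|X^\star\|+L')\epsilon_{r+1}\big).
\]
Taking the $u$-block yields Eq.~(\ref{eq:sensitivity_error}). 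If the outer iterates converge, then $\epsilon_{r+1}\to0$: Theorem~\ref{thm:lapanda_wellposed}(ii) gives $u_{r+1}\to u^\star$ and $\lambda_{r+1}\to\lambda^\star$, and $\bar\eta_{r+1}$ is the active part of $\lambda_{r+1}$, up to the identification noted above. The discrepancy is then $O(\rho_r^{-1})$, and it vanishes as $\rho_r\to\infty$.
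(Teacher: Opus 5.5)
Your argument is essentially the paper's proof. Both go through the same steps: nonsingularity of the original KKT matrix from LICQ and SOSC, the split $K_{r+1}-K=\Delta_{\rho_r}+\mathcal E_{r+1}$ with $\|\mathcal E_{r+1}\|$ and $\|\beta_{r+1}-\beta\|$ of order $\epsilon_{r+1}$ by $C^{2,1}$ regularity, a uniform bound on $\|K_{r+1}^{-1}\|$ by perturbation of $K$, and the subtraction identity $d_{r+1}-d^\star=-K_{r+1}^{-1}\bigl[(\beta_{r+1}-\beta)+(K_{r+1}-K)d^\star\bigr]$, whose first block gives the claim.

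The identification you flagged as the main obstacle is routine. For an active inequality $i$, the multiplier update gives $\lambda_{r+1,i}=\rho_r\max(w_i,0)$ with $w_i=h_i(u_{r+1},\theta)+\rho_r^{-1}\lambda_{r,i}$. Once $\lambda_{r+1,i}$ is close to $\lambda^\star_i>0$ (strict complementarity), this forces $w_i>0$. Hence $z_{r+1,i}=0$ and $\lambda_{r+1,i}=\eta_{r,i}+\rho_r h_i(u_{r+1},\theta)=\bar\eta_{r+1,i}$. For an inactive $i$, eventually $w_i<0$, because $h_i(u^\star,\theta)<0$ and $\rho_r^{-1}\lambda_{r,i}\to0$. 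The paper simply asserts $\bar\eta_{r+1}\to\eta^\star$ from Theorem~\ref{thm:lapanda_wellposed}, so on this point you are more careful than the paper.

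The one real omission is in your last step. $\epsilon_{r+1}$ also contains $\|\mu_{r+1}-\mu^\star\|$, where $\mu_{r+1}$ is the multiplier of the identified active manifold of $\mathbf U$. Theorem~\ref{thm:lapanda_wellposed}(ii) says nothing about $\mu_{r+1}$, so $\epsilon_{r+1}\to0$ does not follow from $u_{r+1}\to u^\star$ and $\lambda_{r+1}\to\lambda^\star$ alone.

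The paper closes this with a separate argument. By LICQ, $\{\mu_{r+1}\}$ is bounded. Passing to the limit in the stationarity condition $\nabla_u\ell(u_{r+1},\theta)+A_{r+1}^\top\bar\eta_{r+1}+C_{r+1}^\top\mu_{r+1}=0$ gives $C^\top(\bar\mu-\mu^\star)=0$ for every accumulation point $\bar\mu$. Full row rank of $C$ then forces $\bar\mu=\mu^\star$.

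A more direct equivalent uses the fact that $C_{r+1}$ keeps full row rank near $u^\star$, so
$\mu_{r+1}=-(C_{r+1}C_{r+1}^\top)^{-1}C_{r+1}\bigl(\nabla_u\ell(u_{r+1},\theta)+A_{r+1}^\top\bar\eta_{r+1}\bigr)$. This is continuous in $(u_{r+1},\bar\eta_{r+1})$, so $\mu_{r+1}\to\mu^\star$. You should add one of these arguments to complete the convergence claim.
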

\begin{proof}
	See Appendix~\ref{app:proof_sensitivity_alignment}.
\end{proof}
\vspace{-2mm}
Theorem~\ref{thm:sensitivity_alignment} explains that the sensitivity obtained by differentiating the converged ALM subproblem approximates the exact solution sensitivity with an explicit error bound. Under the residual viewpoint, without explicitly constructing the system Eq.~(\ref{eq:lapanda_sensitivity}), we compute \(D_\theta u_{r+1}\) through the corresponding adjoint system using matrix-free Krylov methods and automatic-differentiation-based operators. Moreover, the penalty term in the subproblem yields locally positive curvature in the reduced space, which makes the efficient conjugate gradient (CG) method directly applicable. Appendix~\ref{app:matrix_free_operators} derives the required operator expressions and details the sensitivity computation procedure.

\vspace{-3mm}
\section{Experiments}
\vspace{-2mm}
We evaluate \solver{lapanda} through three groups of experiments on
representative nonconvex problems with general constraints, focusing on
computational efficiency and sensitivity accuracy, learning performance, and
embeddable implementation. For each experimental setting, we compare with
representative solvers from Table~\ref{tab:related_solver_comparison} that
support the corresponding problem structure.
\vspace{-2mm}
\subsection{Nonconvex Constrained Rosenbrock Benchmark}
\vspace{-2mm}
We first consider a nonconvex Rosenbrock-chain benchmark with smooth nonlinear constraints:
\begin{equation}
	\begin{aligned}
		\min_{x\in\mathbb{R}^n}\quad &
		\sum_{i=0}^{n-2}
		\left[
		\theta_0(x_{i+1}-x_i^2)^2
		+
		\theta_1(1-x_i)^2
		\right]
		+
		\frac{1}{2}\theta_2\|x\|_2^2 \\
		\mathrm{s.t.}\quad &
		-2 \le x_i \le 2,\qquad i=0,\ldots,n-1,\\
		&
		g_i(x,\theta)
		=
		x_i^2+x_{i+1}^2-r_i(\theta)^2
		\le 0,\qquad i=0,\ldots,n-2,
	\end{aligned}
\end{equation}
where
\(
r_i(\theta)
=
\theta_3
+
0.1\sin\left(\frac{2\pi(i+1)}{n}\right).
\)
We vary the problem dimension \(n\) and compare \solver{lapanda} with two
baselines. \solver{CasADi-IPOPT} is a widely used baseline for general
nonlinear programs, while \solver{Explicit KKT} explicitly forms the original problem's
KKT sensitivity system in Eq.~(\ref{eq:original_kkt_sensitivity}) and solves
this indefinite saddle-point system using MINRES. With the relative
gradient error kept below \(2\%\), we compare their
computation times and memory consumption.

\begin{table}[h]
	\centering
	\caption{Matched-accuracy comparison on the constrained Rosenbrock benchmark.}
	\label{tab:lapanda_casadi_benchmark}
	\setlength{\tabcolsep}{4pt}
	\resizebox{0.92\linewidth}{!}{
		\begin{tabular}{c|ccc|ccc|ccc}
			\toprule
			\multirow{2}{*}{\(n\)}
			& \multicolumn{3}{c|}{Forward time (ms)}
			& \multicolumn{3}{c|}{Backward time (ms)}
			& \multicolumn{3}{c}{Memory (MB)} \\
			\cmidrule(lr){2-4}
			\cmidrule(lr){5-7}
			\cmidrule(lr){8-10}
			& \cellcolor{lapandahighlight}lapanda & Explicit & CasADi
			& \cellcolor{lapandahighlight}lapanda & Explicit & CasADi
			& \cellcolor{lapandahighlight}lapanda & Explicit & CasADi \\
			\midrule
			100  & \cellcolor{lapandahighlight}\textbf{8.23}   & -- & 36.50   & \cellcolor{lapandahighlight}4.04            & 4.22            & \textbf{3.04} & \cellcolor{lapandahighlight}\textbf{1.78} & 5.01  & 17.55 \\
			200  & \cellcolor{lapandahighlight}\textbf{14.37}  & -- & 108.19  & \cellcolor{lapandahighlight}\textbf{5.89}  & 10.23           & 17.05          & \cellcolor{lapandahighlight}\textbf{2.06} & 7.78  & 29.39 \\
			500  & \cellcolor{lapandahighlight}\textbf{49.75}  & -- & 496.03  & \cellcolor{lapandahighlight}\textbf{14.36} & 51.26           & 245.99         & \cellcolor{lapandahighlight}\textbf{3.12} & 25.32 & 84.70 \\
			1000 & \cellcolor{lapandahighlight}\textbf{200.00} & -- & 1968.78 & \cellcolor{lapandahighlight}\textbf{37.36} & 218.21          & 2325.16        & \cellcolor{lapandahighlight}\textbf{5.31} & 79.65 & 249.66 \\
			\bottomrule
		\end{tabular}
			}
\end{table}

\begin{wrapfigure}{r}{0.4\textwidth}
	\vspace{-4mm}
	\includegraphics[width=\linewidth]{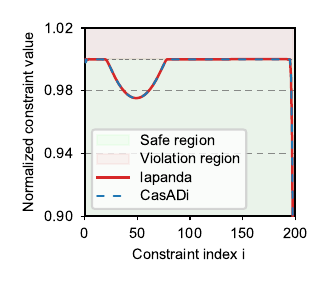}
	\vspace{-10mm}
	\caption{
		Normalized constraint values 
		$\sqrt{x_i^2+x_{i+1}^2}/r_i(\theta)$ for $n=200$.
	}
	\label{fig:constraint_values_N200}
	\vspace{-2em}
\end{wrapfigure}

The main results are reported in Table~\ref{tab:lapanda_casadi_benchmark}.
\solver{lapanda} is generally faster than both baselines, particularly for
larger problem instances. The forward speedup mainly stems from the efficient
\solver{PANDA} inner solver, while the backward advantage comes from the
lightweight operator-based implementation. Its matrix-free formulation also
provides a clear memory advantage. In the representative case shown in
Fig.~\ref{fig:constraint_values_N200}, most nonlinear constraints are active and
are handled well by the augmented Lagrangian method of \solver{lapanda}.
Appendix~\ref{app:rosenbrock_experiment_details} details the experimental
settings, analyzes the sensitivity-error behavior predicted by
Theorem~\ref{thm:sensitivity_alignment}, and describes both the strategies used
to control this error and the accuracy-matching protocol used in this benchmark
to ensure a fair comparison.

\subsection{Imitation Learning with Constrained OCPs}
\vspace{-2mm}
We next evaluate \solver{lapanda} on imitation learning tasks built from constrained
OCPs. The goal is to learn problem parameters such that the
MPC solution matches the target trajectory generated by a teacher controller.
Accordingly, the imitation loss measures the discrepancy between the planned
state-control trajectory and the demonstrated trajectory:
\vspace{-2mm}
\begin{equation}
	\mathcal{L}_{\mathrm{imit}}(\theta)
	=
	\frac{1}{2N_{\mathrm{demo}}}
	\sum_{j=1}^{N_{\mathrm{demo}}}
	\left(
	\left\|
	X^\star(\theta;\xi_j)
	-
	X_j^{\mathrm{demo}}
	\right\|_2^2
	+
	\left\|
	U^\star(\theta;\xi_j)
	-
	U_j^{\mathrm{demo}}
	\right\|_2^2
	\right),
	\label{eq:imit_loss}
\end{equation}
where \(N_{\mathrm{demo}}\) is the number of demonstrations, and
\(X^\star(\theta;\xi_j),U^\star(\theta;\xi_j)\) denote the solution
trajectory under parameter \(\theta\) and initial condition \(\xi_j\).
We consider three representative constrained OCPs:
\vspace{-4mm}
\begin{itemize}
	\item \textbf{CartPole}: nonlinear underactuated dynamics with force bounds
	and a total control-energy constraint.
	\item \textbf{Quadrotor}: nonlinear flight dynamics with thrust and
	torque bounds, altitude and horizontal-position bounds, and attitude
	constraints.
	\item \textbf{Robot Arm}: linear joint-space dynamics with joint-velocity
	bounds and joint-position limits, together with nonlinear, nonconvex
	end-effector obstacle-avoidance geometry.
\end{itemize}
\vspace{-2mm}
\paragraph{Open-loop learning.}
We first conduct open-loop imitation learning experiments. In this setting,
each sample starts from an independent initial condition, and learning is driven by the discrepancy between the OCP solutions obtained with the current parameters and the teacher solutions generated using the target parameters.
This setting evaluates whether the computed sensitivities support stable and efficient parameter learning across different initial conditions.

\begin{figure}[h]
	\centering
	\includegraphics[
	width=\linewidth,
	trim={0 0 0 2mm},
	clip
	]{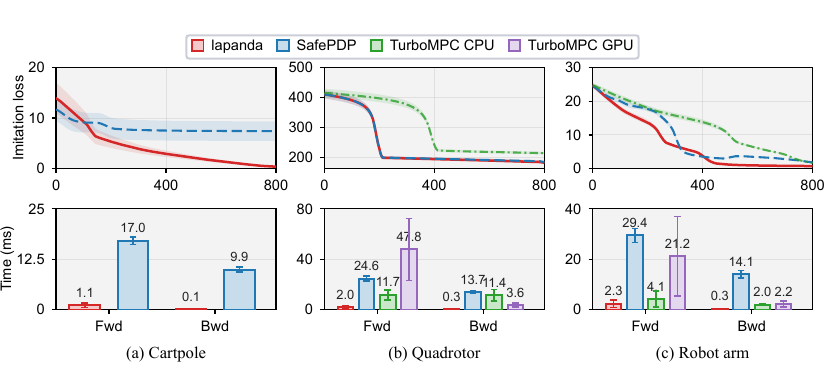}
	\vspace{-2em}
	\caption{
		Open-loop imitation learning results on different constrained OCPs.
	}
	\vspace{-4mm}
	\label{fig:ocp_learning_results}
\end{figure}

As shown in Fig.~\ref{fig:ocp_learning_results}, we summarize the loss trends
and average solve times on different constrained OCPs, comparing \solver{lapanda} with
\solver{SafePDP}, an efficient PMP-based differentiable OCP solver, and \solver{TurboMPC}, a
recent GPU-parallel differentiable MPC solver. \solver{TurboMPC} fails to converge on
the CartPole task because it mainly supports
pointwise inequalities, whereas this task includes a trajectory-level
control-energy constraint. Across the tested tasks, \solver{lapanda} achieves stable
learning and the fastest solve time for these OCPs with multiple
constraints. In comparison, the GPU implementation of \solver{TurboMPC} is less
effective at this problem scale, consistent with its reported constant-factor
overhead from sparse direct linear solves.
\paragraph{Closed-loop learning.}
We further evaluate closed-loop imitation learning. Given a demonstration trajectory generated by the target parameters, the learner repeatedly rolls out the system using the current parameters in a receding-horizon manner. Learning is driven by minimizing the discrepancy between the resulting closed-loop trajectory and the demonstration, thereby directly evaluating whether the learned parameters recover the desired closed-loop policy.

\begin{figure}[h]
	\centering
	\includegraphics[
	width=\linewidth,
	trim={0 0 0 1mm},
	clip
	]{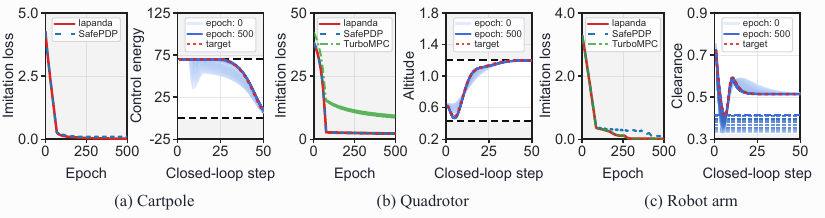}
	\vspace{-2em}
	\caption{
		Closed-loop imitation learning results including training
		loss and policy evolution.
	}
	\label{fig:closed_loop_learning_results}
\end{figure}

As shown in Fig.~\ref{fig:closed_loop_learning_results}, \solver{lapanda} achieves stable policy learning in the closed-loop setting. The constraint evolution shows that \solver{lapanda} maintains the imposed constraints within the prescribed tolerance throughout training. In the Robot Arm task, the learned behavior also involves learning the constraint-related parameter, as reflected by the change in the obstacle-avoidance constraint during training. More detailed computational results are reported in Table~\ref{tab:closed_loop_ocp_results}.
\vspace{-4mm}
\begin{table}[h]
	\centering
	\caption{Detailed performance comparison of different solvers on closed-loop OCP benchmarks.}
	\label{tab:closed_loop_ocp_results}
	\resizebox{0.9\columnwidth}{!}{%
		\begin{tabular}{clcccc}
			\toprule
			OCP & Method & Fwd. time (ms) & Bwd. time (ms) & Total time (ms) & Memory (MB) \\
			\midrule
			
			\multirow{2}{*}{CartPole}
			& \cellcolor{lapandahighlight}lapanda
			& \cellcolor{lapandahighlight}\textbf{5.10}
			& \cellcolor{lapandahighlight}\textbf{0.09}
			& \cellcolor{lapandahighlight}\textbf{5.20}
			& \cellcolor{lapandahighlight}\textbf{1.64} \\
			
			& SafePDP
			& 22.06
			& 9.65
			& 31.71
			& 7.71 \\
			
			\midrule
			
			\multirow{4}{*}{Quadrotor}
			& \cellcolor{lapandahighlight}lapanda
			& \cellcolor{lapandahighlight}\textbf{1.20}
			& \cellcolor{lapandahighlight}\textbf{0.24}
			& \cellcolor{lapandahighlight}\textbf{1.45}
			& \cellcolor{lapandahighlight}\textbf{1.79} \\
			
			& SafePDP
			& 29.86
			& 13.41
			& 43.27
			& 9.23 \\
			
			& TurboMPC-CPU
			& 10.46
			& 7.74
			& 18.20
			& 241.30 \\
			
			& TurboMPC-GPU
			& 21.78
			& 2.28
			& 24.06
			& 336.48 \\
			
			\midrule
			
			\multirow{4}{*}{Robot Arm}
			& \cellcolor{lapandahighlight}lapanda
			& \cellcolor{lapandahighlight}\textbf{2.36}
			& \cellcolor{lapandahighlight}\textbf{0.29}
			& \cellcolor{lapandahighlight}\textbf{2.66}
			& \cellcolor{lapandahighlight}\textbf{1.76} \\
			
			& SafePDP
			& 27.94
			& 12.85
			& 40.79
			& 8.02 \\
			
			& TurboMPC-CPU
			& 5.62
			& 1.17
			& 6.79
			& 248.35 \\
			
			& TurboMPC-GPU
			& 24.09
			& 1.96
			& 26.05
			& 347.11 \\
			
			\bottomrule
		\end{tabular}
	}
\end{table}

\vspace{-4mm}
The solver runtimes exhibit a trend similar to that observed in the open-loop experiments. For memory usage, we report the sum of the solver construction overhead and
the peak temporary memory overhead during the solve. \solver{TurboMPC} incurs a heavier memory footprint due to its GPU and parallel implementation, whereas \solver{lapanda} remains substantially more lightweight. Detailed problem formulations and additional experimental results are provided in Appendix~\ref{app:ocp_experiment_details}, and all solvers in the above experiments use the same stopping tolerance of $10^{-3}$.
\subsection{Embedded Deployment}
\begin{wrapfigure}{r}{0.48\linewidth}
	\vspace{-6mm}
	\centering
	\includegraphics[width=0.92\linewidth,trim={0mm 0mm 0mm 0mm},clip]{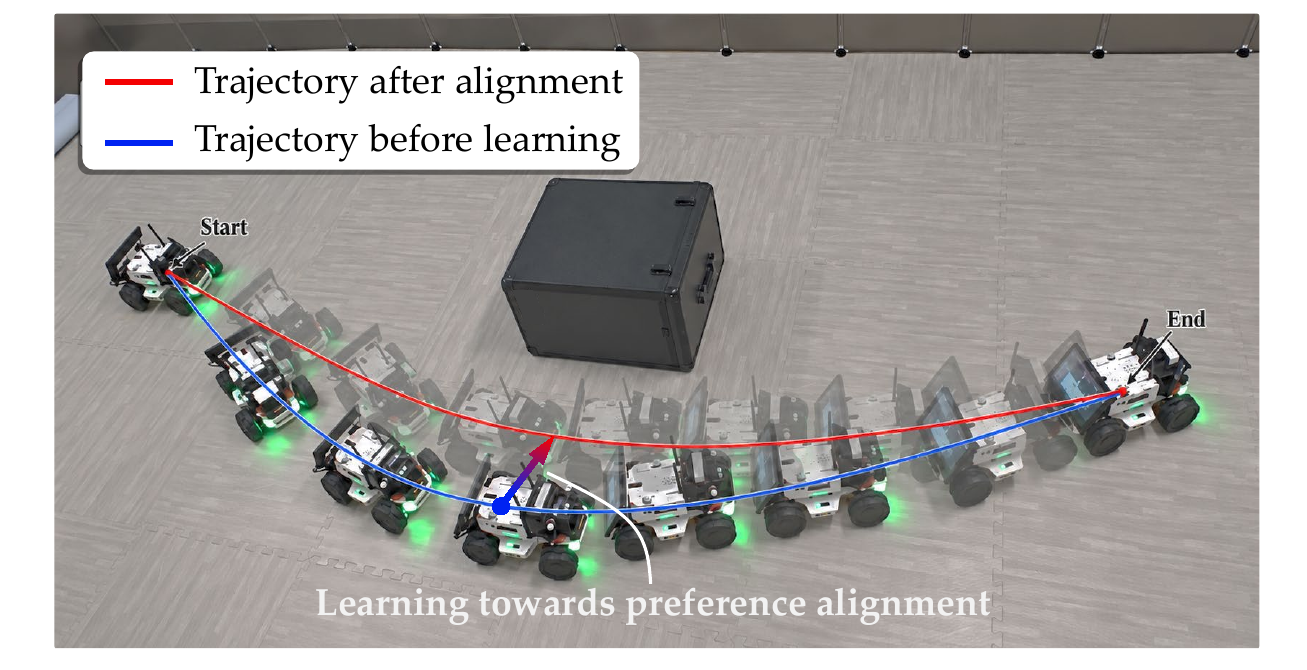}
	\vspace{-4mm}
	\caption{Mobile robot trajectory alignment.}
	\label{fig:embedded_platform}
	\vspace{-6mm}
\end{wrapfigure}

Finally, we evaluate \solver{lapanda} on an embedded mobile robot equipped with a six-core Arm\textsuperscript{\textregistered} Cortex\textsuperscript{\textregistered}-A78AE processor. We consider two obstacle-avoidance tasks: circular obstacle avoidance with smooth nonlinear constraints and rectangular obstacle avoidance formulated as an MPCC. We compare \solver{lapanda} with \solver{acados}, a representative embedded nonlinear optimal control solver, and summarize the results in Table~\ref{tab:embedded_results}.
\vspace{-8mm}
\begin{table}[h]
	\centering
	\caption{Embedded deployment performance comparison.}
	\label{tab:embedded_results}
	\begin{tabular}{clcccc}
		\toprule
		Obstacle & Method & Success & Fwd. (ms) & Bwd. (ms) & Memory (MB) \\
		\midrule
		
		\multirow{2}{*}{Circle}
		& \cellcolor{lapandahighlight}lapanda
		& \cellcolor{lapandahighlight}\cmark
		& \cellcolor{lapandahighlight}1.76
		& \cellcolor{lapandahighlight}0.16
		& \cellcolor{lapandahighlight}2.16 \\
		
		& acados
		& \cmark
		& 3.27
		& 0.24
		& 4.52 \\
		
		\midrule
		
		\multirow{2}{*}{Rectangle}
		& \cellcolor{lapandahighlight}lapanda
		& \cellcolor{lapandahighlight}\cmark
		& \cellcolor{lapandahighlight}55.5
		& \cellcolor{lapandahighlight}1.13
		& \cellcolor{lapandahighlight}2.60 \\
		
		& acados
		& \xmark
		& --
		& --
		& -- \\
		
		\bottomrule
	\end{tabular}
\end{table}

\vspace{-4mm}
As summarized in Table~\ref{tab:embedded_results}, both \solver{lapanda} and \solver{acados} successfully solve the circular obstacle-avoidance task. 
The rectangular task is more challenging because its MPCC formulation violates
LICQ and thereby induces a singular KKT matrix. Consequently, \solver{acados},
which relies on local KKT models, fails to converge under the tested
configurations.
In contrast, the first-order \solver{PANDA} inner solver of \solver{lapanda} relies on a proximal-residual optimality measure rather than KKT systems. Combined with ALM constraint penalization, it can progressively approach the feasible set from infeasible iterates, empirically converging outside our theoretical assumptions and supporting successful imitation learning. The learning process is shown in Fig.~\ref{fig:embedded_learning}. We further evaluate both solvers using a smooth approximation of the rectangular-obstacle constraint in Appendix~\ref{app:embedded_experiment_details}, where the detailed problem formulations and solver configurations are provided.
\vspace{-2mm}
\begin{figure}[h]
	\centering
	\includegraphics[
	width=\linewidth,
	trim={0 0mm 0 2mm},
	clip
	]{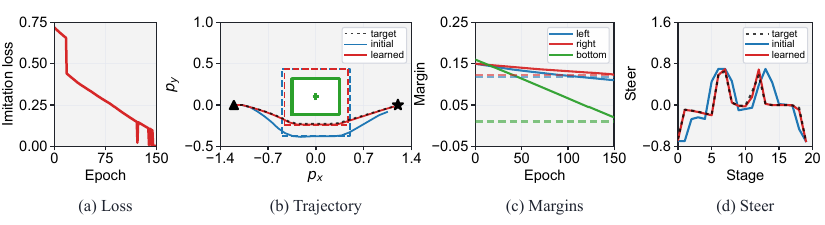}
	\vspace{-7mm}
	\caption{Learning performance for the rectangular obstacle-avoidance imitation task.}
	\label{fig:embedded_learning}
\end{figure}
\vspace{-6mm}
\section{Conclusion}
\vspace{-3mm}
We proposed \solver{lapanda}, a matrix-free differentiable solver for nonconvex optimization layers with general constraints. We established its optimization and sensitivity principles and validated its effectiveness through extensive experiments, demonstrating fast computation, low memory consumption, reliable constraint satisfaction, and stable learning performance. With efficient implementations and user-friendly interfaces across multiple platforms (Appendix~D), \solver{lapanda} provides a practical foundation for extending differentiable optimization to broader learning and control applications.

\textbf{Limitations and future work.}
First, the finite ALM penalties used in \solver{lapanda} introduce approximation errors in the backward sensitivity. Although stable learning is observed across most experiments, further theoretical characterization and matrix-free differentiation of the original KKT system warrant investigation. Second, \solver{lapanda} requires careful tuning of multiplier initialization, penalty updates, and \solver{PANDA} parameters. Adaptive parameter strategies could therefore reduce manual tuning and improve robustness. Third, as a differentiable solver for nonconvex optimization layers with general constraints, \solver{lapanda} does not yet exploit the temporal sparsity and dynamical structure specific to OCPs, leaving room for further scalability improvements. Finally, the current implementation targets lightweight CPU execution. Extending \solver{lapanda} to GPU-parallel and batched computation represents a natural next step.

\subsection*{Acknowledgments}
This research was supported in part by the Japan Science and Technology Agency
(JST), the Establishment of University Fellowships Toward the Creation of
Science Technology Innovation, under Grant JPMJFS2132, and in part under Grant
JPMJSP2136.

\bibliography{lapanda_arxiv}
\bibliographystyle{iclr2027_conference}

\clearpage
\appendix

\section*{\centering Appendix}
\vspace{-0.5em}

\section{Problem Definition and OCP Transcription}
\label{app:ocp_transcription}

This appendix presents the single-shooting transcription adopted for constrained optimal control problems. Consider the finite-horizon problem
\begin{equation}
	\begin{aligned}
		\min_{\{x_k,u_k\}}
		\quad &
		\sum_{k=0}^{N-1}
		\ell_k(x_k,u_k;\theta)
		+
		\ell_N(x_N;\theta)
		\\
		\mathrm{s.t.}\quad
		&
		x_0=x_{\mathrm{init}}(\theta),
		\\
		&
		x_{k+1}=f_k(x_k,u_k;\theta),
		\\
		&
		c_k(x_k,u_k;\theta)=0,
		\quad
		h_k(x_k,u_k;\theta)\le 0,
		\\
		&
		c_N(x_N;\theta)=0,
		\quad
		h_N(x_N;\theta)\le 0,
		\\
		&
		u_k\in\mathbf{U}_k,
		\quad k=0,\dots,N-1 .
	\end{aligned}
	\label{eq:app_ocp}
\end{equation}

In single shooting, the state variables are eliminated by recursively applying the system dynamics. For the stacked control vector
$
u:=(u_0^\top,\ldots,u_{N-1}^\top)^\top,
$
the state rollout is defined as
\begin{equation}
	x_0(u;\theta)=x_{\mathrm{init}}(\theta),\qquad
	x_{k+1}(u;\theta)=f_k(x_k(u;\theta),u_k;\theta).
	\label{eq:app_single_shooting_rollout}
\end{equation}
Substituting Eq.~(\ref{eq:app_single_shooting_rollout}) into the objective and constraints gives
\begin{subequations}
\begin{equation}
	\ell(u,\theta)
	:=
	\sum_{k=0}^{N-1}\ell_k(x_k(u;\theta),u_k;\theta)
	+
	\ell_N(x_N(u;\theta);\theta),
	\label{eq:app_ocp_objective}
\end{equation}
and
\begin{equation}
	c(u,\theta):=
	\begin{bmatrix}
		c_0(x_0(u;\theta),u_0;\theta)\\
		\vdots\\
		c_{N-1}(x_{N-1}(u;\theta),u_{N-1};\theta)\\
		c_{N}(x_{N}(u;\theta);\theta)
	\end{bmatrix},
	\quad
	h(u,\theta):=
	\begin{bmatrix}
		h_0(x_0(u;\theta),u_0;\theta)\\
		\vdots\\
		h_{N-1}(x_{N-1}(u;\theta),u_{N-1};\theta)\\
		h_{N}(x_{N}(u;\theta);\theta)
	\end{bmatrix}.
	\label{eq:app_ocp_constraints}
\end{equation}
\end{subequations}

The simple constraint set becomes
$
\mathbf U:=\mathbf U_0\times\cdots\times\mathbf U_{N-1}.
$
Therefore, Eq.~(\ref{eq:app_ocp}) is reformulated as the control-only constrained optimization problem
\begin{equation}
	\min_{u\in\mathbf U}\ \ell(u,\theta)
	\quad\mathrm{s.t.}\quad
	c(u,\theta)=0,\qquad
	h(u,\theta)\le0,
\end{equation}
which is the form of Problem~(\ref{eq:problem_p}).

\section{Theoretical Details}

\subsection{Eliminating the Slack Variable}
\label{app:alm_reduction}

For fixed \(u\), the \(z\)-subproblem in Eq.~(\ref{eq:alm_lagrangian}) is
\begin{equation}
	\min_{z\in\mathcal D}
	\lambda^\top(G(u,\theta)-z)
	+
	\frac{\rho}{2}\|G(u,\theta)-z\|^2 .
\end{equation}

Completing the square gives
\begin{equation}
	\frac{\rho}{2}
	\left\|
	G(u,\theta)+\rho^{-1}\lambda-z
	\right\|^2
	-
	\frac{1}{2\rho}\|\lambda\|^2,
\end{equation}
so that
\begin{equation}
	z^\star(u)
	=
	\Pi_{\mathcal D}\left(G(u,\theta)+\rho^{-1}\lambda\right).
\end{equation}

Substituting this minimizer into the augmented Lagrangian and dropping the
constant term \(-\|\lambda\|^2/(2\rho)\) yields the reduced objective
\begin{equation*}
	\psi_\rho(u;\theta,\lambda)
	=
	\ell(u,\theta)
	+
	\frac{\rho}{2}
	\operatorname{dist}^2
	\left(G(u,\theta)+\rho^{-1}\lambda,\mathcal D\right),
\end{equation*}

which gives Eq.~(\ref{eq:reduced_alm_subproblem}) and
Eq.~(\ref{eq:reduced_alm_objective}).

\subsection{PANDA Inner Solver}
\label{app:panda_composite_subproblem}

Given the ALM variables \((\lambda_r,\rho_r)\), the reduced inner
subproblem takes the composite form
\begin{equation}
	\min_u\ 
	\varphi_r(u)
	:=
	\psi_{\rho_r}(u;\theta,\lambda_r)
	+
	\delta_{\mathbf U}(u),
	\label{eq:panda_inner_composite}
\end{equation}
where \(\psi_{\rho_r}\) is the smooth reduced ALM objective and
\(\delta_{\mathbf U}\) encodes the simple constraint on \(u\).
For a stepsize \(\gamma>0\), define the forward-backward mapping
\begin{equation}
	T_{\gamma,r}(u)
	:=
	\operatorname{prox}_{\gamma\delta_{\mathbf U}}
	\left(
	u-\gamma\nabla_u\psi_{\rho_r}(u;\theta,\lambda_r)
	\right)
	=
	\Pi_{\mathbf U}
	\left(
	u-\gamma\nabla_u\psi_{\rho_r}(u;\theta,\lambda_r)
	\right),
	\label{eq:panda_fbm}
\end{equation}
and the associated forward-backward residual
\begin{equation}
	R_{\gamma,r}(u)
	:=
	\frac{1}{\gamma}
	\left(
	u-T_{\gamma,r}(u)
	\right).
	\label{eq:panda_residual}
\end{equation}
A point satisfying \(R_{\gamma,r}(u)=0\) is an \(R\)-critical point.
For the indicator term \(\delta_{\mathbf U}\), this condition is equivalent to
\begin{equation}
	0\in
	\nabla_u\psi_{\rho_r}(u;\theta,\lambda_r)
	+
	N_{\mathbf U}(u),
	\label{eq:panda_stationarity}
\end{equation}
and hence it characterizes a stationary point of the reduced ALM subproblem.

\solver{PANDA} solves the residual equation \(R_{\gamma,r}(u)=0\) by combining three
mechanisms. (i) It uses the forward-backward mapping as a descent step for the
composite objective and uses the residual norm as the stationarity measure.
(ii) It constructs a limited-memory quasi-Newton correction for the residual
equation, so that the local convergence is faster than a plain projected
gradient iteration. (iii) It globalizes the quasi-Newton correction through an
averaged trial point and an FBE-based line search. In addition, \solver{PANDA} adapts the
stepsize \(\gamma\): it reduces \(\gamma\) when the local upper model is not
valid, and attempts to enlarge \(\gamma\) when the current stepsize becomes too
conservative.

The simplified inner routine is summarized in
Algorithm~\ref{alg:panda_inner}. The output is an approximate \(R\)-critical
point of Eq.~(\ref{eq:panda_inner_composite}), which is later used for implicit
differentiation through \(R_{\gamma,r}(u_{r+1})=0\).

\begin{algorithm}[h]
	\caption{PANDA inner solver for the reduced ALM subproblem}
	\label{alg:panda_inner}
	\begin{algorithmic}[1]
		\STATE \textbf{Input:} \(u^0\), \(\gamma_0>0\), tolerance
		\(\varepsilon_{\mathrm{in}}\), ALM variables \((\lambda_r,\rho_r)\)
		\FOR{\(k=0,1,\ldots\)}
		\STATE Compute \(\bar u^k=T_{\gamma_k,r}(u^k)\) and
		\(R_{\gamma_k,r}(u^k)=\gamma_k^{-1}(u^k-\bar u^k)\).
		\IF{\(\|R_{\gamma_k,r}(u^k)\|\le \varepsilon_{\mathrm{in}}\)}
		\STATE \textbf{return} \(u^{k}\)
		\ENDIF
		\STATE Construct a quasi-Newton direction
		\(d^k\approx -H_kR_{\gamma_k,r}(u^k)\).
		\STATE Form the averaged trial point
		\(u_{\mathrm{trial}}^k=(1-\sigma_k)\bar u^k+\sigma_k(u^k+d^k)\),
		with \(\sigma_k\in(0,1]\).
		\STATE Backtrack \(\sigma_k\) until the FBE sufficient decrease condition holds.
		\STATE Adapt \(\gamma_k\) according to the local upper-model condition.
		\STATE Set \(u^{k+1}=u_{\mathrm{trial}}^k\).
		\ENDFOR
	\end{algorithmic}
\end{algorithm}

\subsection{Proof of Theorem~\ref{thm:lapanda_wellposed}}
\label{app:convergence}

We prove the three statements in Theorem~\ref{thm:lapanda_wellposed}. The
argument follows the local method-of-multipliers proof in
\citet[Proposition~4.2.3]{bertsekas1999nonlinear} and the
positive-definiteness argument in \citet[Theorem~17.5]{nocedal2006numerical},
with the difference that we apply the argument to the slack reformulation and
retain the simple constraints \(\mathbf U\times\mathcal D\).

Set \(y=(u,z)\), \(\mathcal C=\mathbf U\times\mathcal D\), and
\(r(y,\theta)=G(u,\theta)-z\). Then the slack problem is locally
\begin{equation}
	\min_{y\in\mathcal C}\ \ell(u,\theta)
	\quad\mathrm{s.t.}\quad
	r(y,\theta)=0.
	\label{eq:app_slack_problem_local}
\end{equation}
By strict complementarity, the active constraints of \(\mathcal C\) are locally
identified. We denote the corresponding active manifold by \(b(y)=0\), with
multiplier \(\nu\), and define
\begin{equation}
	\mathscr L(y,\lambda,\nu;\theta)
	=
	\ell(u,\theta)
	+
	\langle\lambda,r(y,\theta)\rangle
	+
	\langle\nu,b(y)\rangle.
	\label{eq:app_slack_lagrangian}
\end{equation}
Let \(y^\star=(u^\star,z^\star)\),
\(J_r^\star=\nabla_y r(y^\star,\theta)\),
\(J_b^\star=\nabla b(y^\star)\), and
\(W^\star=\nabla_{yy}^2\mathscr L(y^\star,\lambda^\star,\nu^\star;\theta)\).
LICQ means that
\(\begin{bmatrix}J_r^\star\\ J_b^\star\end{bmatrix}\)
has full row rank, and SOSC means that
\(d^\top W^\star d>0\) for all nonzero \(d\) satisfying
\(J_r^\star d=0\) and \(J_b^\star d=0\).

\paragraph{Claim (i).}
For fixed \((\lambda_r,\rho_r)\), the slack ALM subproblem is
\begin{equation}
	\min_{y\in\mathcal C}
	\quad
	\ell(u,\theta)
	+
	\lambda_r^\top r(y,\theta)
	+
	\frac{\rho_r}{2}\|r(y,\theta)\|^2,
	\label{eq:slack_alm_subproblem_proof}
\end{equation}
The local KKT conditions of
Eq.~(\ref{eq:slack_alm_subproblem_proof}) are
\begin{equation}
	\begin{aligned}
		0
		&=
		\nabla_y\ell(u,\theta)
		+
		\nabla_y r(y,\theta)^\top
		\left(
		\lambda_r+\rho_r r(y,\theta)
		\right)
		+
		\nabla b(y)^\top\nu,\\
		0
		&=
		b(y).
	\end{aligned}
	\label{eq:slack_alm_subproblem_kkt}
\end{equation}
Define the shifted multiplier
\(\lambda:=\lambda_r+\rho_r r(y,\theta)\). Then
Eq.~(\ref{eq:slack_alm_subproblem_kkt}), together with the multiplier relation,
can be written as
\begin{equation}
	\begin{aligned}
		0
		&=
		\nabla_y\ell(u,\theta)
		+
		\nabla_y r(y,\theta)^\top\lambda
		+
		\nabla b(y)^\top\nu,\\
		0
		&=
		r(y,\theta)
		-
		\rho_r^{-1}
		\left(
		\lambda-\lambda_r
		\right),\\
		0
		&=
		b(y).
	\end{aligned}
	\label{eq:slack_alm_kkt_with_lambda}
\end{equation}

We first verify the second-order condition for the ALM subproblem. At
\(y^\star\), since \(r(y^\star,\theta)=0\), the Hessian of the ALM objective on
the identified active manifold is
\(W^\star+\rho J_r^{\star\top}J_r^\star\), where
\(W^\star=\nabla_{yy}^2
\mathscr L(y^\star,\lambda^\star,\nu^\star;\theta)\).
By LICQ and SOSC, there exists \(\bar\rho>0\) such that
\begin{equation}
	d^\top
	\left(
	W^\star+\rho J_r^{\star\top}J_r^\star
	\right)d
	>0
	\quad
	\forall d\neq0,\quad
	J_b^\star d=0,\quad
	\rho\ge\bar\rho.
	\label{eq:alm_pd_reference}
\end{equation}
Indeed, if this were false, then there would exist
\(\rho_k\to\infty\) and \(\|d_k\|=1\), with \(J_b^\star d_k=0\), such that
\begin{equation}
	d_k^\top W^\star d_k
	+
	\rho_k\|J_r^\star d_k\|^2
	\le0.
	\label{eq:app_pd_contradiction_sequence}
\end{equation}
Hence \(J_r^\star d_k\to0\). Taking a convergent subsequence gives a nonzero
\(\bar d\) satisfying \(J_b^\star\bar d=0\) and
\(J_r^\star\bar d=0\), which contradicts SOSC.

Set \(t:=\rho_r^{-1}(\lambda_r-\lambda^\star)\) and
\(\kappa:=\rho_r^{-1}\). Then
Eq.~(\ref{eq:slack_alm_kkt_with_lambda}) is equivalently written as
\begin{equation}
	\Phi(y,\lambda,\nu;t,\kappa)=0,
	\label{eq:alm_ift_system}
\end{equation}
where
\[
\Phi(y,\lambda,\nu;t,\kappa)
=
\begin{bmatrix}
	\nabla_y\ell(u,\theta)
	+
	\nabla_y r(y,\theta)^\top\lambda
	+
	\nabla b(y)^\top\nu\\[1mm]
	r(y,\theta)
	+
	t
	+
	\kappa\lambda^\star
	-
	\kappa\lambda\\[1mm]
	b(y)
\end{bmatrix}.
\label{eq:alm_phi_def}
\]
For \(t=0\), the point
\((y^\star,\lambda^\star,\nu^\star)\) solves
Eq.~(\ref{eq:alm_ift_system}) for all
\(\kappa\in[0,\bar\rho^{-1}]\). The Jacobian of \(\Phi\) with respect to
\((y,\lambda,\nu)\) at this point is
\begin{equation}
	K_\kappa
	=
	\begin{bmatrix}
		W^\star & J_r^{\star\top} & J_b^{\star\top}\\
		J_r^\star & -\kappa I & 0\\
		J_b^\star & 0 & 0
	\end{bmatrix}.
	\label{eq:alm_ift_jacobian}
\end{equation}

We claim that \(K_\kappa\) is nonsingular for all
\(\kappa\in[0,\bar\rho^{-1}]\). For \(\kappa=0\), this follows from the
standard KKT matrix nonsingularity under LICQ and SOSC. For \(\kappa>0\),
suppose \(K_\kappa(d,p,s)=0\). Then
\begin{equation}
	\begin{aligned}
		W^\star d
		+
		J_r^{\star\top}p
		+
		J_b^{\star\top}s
		&=0,\\
		J_r^\star d-\kappa p
		&=0,\\
		J_b^\star d
		&=0.
	\end{aligned}
	\label{eq:kappa_null_system}
\end{equation}
Multiplying the first equation in
Eq.~(\ref{eq:kappa_null_system}) by \(d^\top\) and using the second and third
equations gives
\begin{equation}
	d^\top W^\star d
	+
	\kappa^{-1}\|J_r^\star d\|^2
	=0.
	\label{eq:app_kappa_energy_identity}
\end{equation}
By Eq.~(\ref{eq:alm_pd_reference}), this implies \(d=0\). Then
\(J_r^{\star\top}p+J_b^{\star\top}s=0\), and LICQ gives \(p=0\) and \(s=0\).
Thus \(K_\kappa\) is nonsingular.

Since \(K_\kappa\) depends continuously on \(\kappa\) and is nonsingular on
the compact interval \([0,\bar\rho^{-1}]\), its inverse is uniformly bounded.
The parameterized implicit function theorem therefore provides a common local
neighborhood for all \(\kappa\in[0,\bar\rho^{-1}]\). Consequently, there exist
\(\delta>0\) and a neighborhood of
\((y^\star,\lambda^\star,\nu^\star)\) such that, whenever
\(\rho_r\ge\bar\rho\) and
\(\|\lambda_r-\lambda^\star\|\le\rho_r\delta\), the system in
Eq.~(\ref{eq:alm_ift_system}) has a unique local solution
\((y_{r+1},\lambda_{r+1},\nu_{r+1})\). Equivalently,
\(y_{r+1}=(u_{r+1},z_{r+1})\) is the unique local solution of the slack ALM
subproblem in Eq.~(\ref{eq:slack_alm_subproblem_proof}), and
\(\lambda_{r+1}
=
\lambda_r+\rho_r r(y_{r+1},\theta)\).

Finally, by taking the neighborhood smaller if necessary, strict
complementarity keeps the active set of \(\mathcal C\) fixed, LICQ persists by
continuity of the active constraint gradients, and the positive-definiteness
condition in Eq.~(\ref{eq:alm_pd_reference}) gives SOSC at \(y_{r+1}\). This
proves claim (i).

\paragraph{Claim (ii).}
From the implicit function construction in claim (i), there exist local \(C^1\) functions \(y(t,\kappa)\) and \(\lambda(t,\kappa)\) such that
\(y_{r+1}=y(t_r,\kappa_r)\) and
\(\lambda_{r+1}=\lambda(t_r,\kappa_r)\), where
\(t_r=\rho_r^{-1}(\lambda_r-\lambda^\star)\) and
\(\kappa_r=\rho_r^{-1}\). Moreover,
\(y(0,\kappa)=y^\star\) and
\(\lambda(0,\kappa)=\lambda^\star\) for all
\(\kappa\in[0,\bar\rho^{-1}]\).

Since the implicit functions are \(C^1\), and since
\(\kappa\in[0,\bar\rho^{-1}]\) belongs to a compact interval, their derivatives
with respect to \(t\) are uniformly bounded. Hence, there exists \(M>0\) such
that, for all sufficiently small \(t\) and all
\(\kappa\in[0,\bar\rho^{-1}]\),
\begin{equation}
	\|y(t,\kappa)-y(0,\kappa)\|
	+
	\|\lambda(t,\kappa)-\lambda(0,\kappa)\|
	\le
	M\|t\|.
	\label{eq:alm_ift_lipschitz}
\end{equation}
Taking \(t=t_r\) and \(\kappa=\kappa_r\) in
Eq.~(\ref{eq:alm_ift_lipschitz}) gives
\begin{equation}
	\|y_{r+1}-y^\star\|
	+
	\|\lambda_{r+1}-\lambda^\star\|
	\le
	M\|t_r\|.
	\label{eq:alm_local_estimate_pre}
\end{equation}
Using \(t_r=\rho_r^{-1}(\lambda_r-\lambda^\star)\), we obtain
\begin{equation}
	\|y_{r+1}-y^\star\|
	+
	\|\lambda_{r+1}-\lambda^\star\|
	\le
	M
	\frac{\|\lambda_r-\lambda^\star\|}{\rho_r}.
	\label{eq:alm_local_estimate}
\end{equation}
Since \(y=(u,z)\), Eq.~(\ref{eq:alm_local_estimate}) directly implies the estimates
in claim (ii).

\paragraph{Claim (iii).}
By Appendix~\ref{app:alm_reduction}, eliminating \(z\) gives
\begin{equation*}
	z_{\rho_r}(u)
	=
	\Pi_{\mathcal D}
	\left(
	G(u,\theta)+\rho_r^{-1}\lambda_r
	\right),
\end{equation*}
and the reduced composite subproblem
\begin{equation*}
	\min_{u\in\mathbf U}\ 
	\psi_{\rho_r}(u;\theta,\lambda_r)
	+
	\delta_{\mathbf U}(u).
\end{equation*}
Since the slack ALM subproblem satisfies LICQ and SOSC at
\(y_{r+1}\), it has local quadratic growth on the identified active manifold:
for nearby feasible \(y=(u,z)\),
\begin{equation}
	\mathcal L_{\rho_r}(y,\lambda_r;\theta)
	\ge
	\mathcal L_{\rho_r}(y_{r+1},\lambda_r;\theta)
	+
	\alpha\|y-y_{r+1}\|^2
	\label{eq:app_slack_quadratic_growth}
\end{equation}
for some \(\alpha>0\). Taking \(z=z_{\rho_r}(u)\) and using
\(z_{r+1}=z_{\rho_r}(u_{r+1})\), we obtain
\begin{equation}
	\psi_{\rho_r}(u;\theta,\lambda_r)
	\ge
	\psi_{\rho_r}(u_{r+1};\theta,\lambda_r)
	+
	\alpha\|u-u_{r+1}\|^2
	\label{eq:app_reduced_quadratic_growth}
\end{equation}
for all nearby \(u\in\mathbf U\). Thus \(u_{r+1}\) is a strong local minimizer
of the reduced composite subproblem.

It remains to justify prox-regularity. By Assumption~\ref{ass:local_regular},
\(\mathbf U\) admits a local \(C^2\) active-manifold representation
\(b_{\mathbf U}(u)=0\), and LICQ holds on this manifold. Hence every nearby
normal vector \(v\in N_{\mathbf U}(u)\) admits the representation
\(v=\nabla b_{\mathbf U}(u)^\top\xi\), with
\(\|\xi\|\le c_0\|v\|\). For nearby \(u,\bar u\in\mathbf U\), Taylor
expansion of
\(b_{\mathbf U}(\bar u)=b_{\mathbf U}(u)=0\) gives
\begin{equation}
	\left\langle
	\nabla b_{\mathbf U}(u)^\top\xi,\bar u-u
	\right\rangle
	\le
	c_1\|\xi\|\,\|\bar u-u\|^2.
	\label{eq:app_active_manifold_taylor}
\end{equation}
Therefore,
\begin{equation}
	\langle v,\bar u-u\rangle
	\le
	c_0c_1\|v\|\,\|\bar u-u\|^2,
	\label{eq:prox_regular_ineq}
\end{equation}
which is the local prox-regularity inequality. Thus \(\mathbf U\) is
prox-regular around \(u_{r+1}\), equivalently
\(\delta_{\mathbf U}\) is locally prox-regular relative to the identified
active manifold.

Together with the local smoothness of \(\psi_{\rho_r}\), these properties
satisfy the local regularity assumptions of the \solver{PANDA} convergence result.
Hence, the \solver{PANDA} inner iteration is locally well posed and converges locally
to \(u_{r+1}\). This proves claim (iii), and the proof is complete.

\subsection{Sensitivity of the Final lapanda Subproblem}
\label{app:alm_sensitivity_derivation}

Recall that the reduced subproblem at the \(r\)-th ALM iteration is
\begin{equation}
	u_{r+1}(\theta)
	\in
	\underset{u\in\mathbf U}{\operatorname{argmin}}
	\psi_{\rho_r}(u;\theta,\lambda_r),
	\label{eq:app_final_reduced_subproblem}
\end{equation}
where
$
\psi_{\rho_r}(u;\theta,\lambda_r)
=
\ell(u,\theta)
+
\frac{\rho_r}{2}
\operatorname{dist}^2
\left(
G(u,\theta)+\rho_r^{-1}\lambda_r,
\mathcal D
\right).
\label{eq:app_final_reduced_objective}
$

As in Eq.~(\ref{eq:active_manifold_constraints}), let
\(a(u,\theta)=0\) collect the equality and locally active inequality
constraints, and let \(\phi(u)=0\) describe the identified active manifold
of \(\mathbf U\). At \(u_{r+1}\), define the shifted active-set multiplier
\(
\bar\eta_{r+1}
:=
\eta_r
+
\rho_r a(u_{r+1},\theta).
\)
Here, \(\eta_r\) contains the corresponding components of \(\lambda_r\).
The inactive inequality components do not enter the local reduced system
because their projections remain in the interior of \(\mathbb R_-\). Let
\(A_{r+1}:=\nabla_u a(u_{r+1},\theta)\) and
\(C_{r+1}:=\nabla_u\phi(u_{r+1})\), and let \(\mu_{r+1}\) be the
multiplier of \(\phi(u)=0\). The local KKT conditions are
\begin{equation}
	\begin{aligned}
		\nabla_u\ell(u_{r+1},\theta)
		+
		A_{r+1}^\top\bar\eta_{r+1}
		+
		C_{r+1}^\top\mu_{r+1}
		&=0,\\
		a(u_{r+1},\theta)
		-
		\rho_r^{-1}
		\left(
		\bar\eta_{r+1}-\eta_r
		\right)
		&=0,\\
		\phi(u_{r+1})
		&=0.
	\end{aligned}
	\label{eq:alm_reduced_active_kkt}
\end{equation}
The second condition is the definition of \(\bar\eta_{r+1}\) rearranged.
Throughout the differentiation, \(\eta_r\) and \(\rho_r\) are treated as
fixed.
Differentiating the stationarity condition with respect to \(\theta\) gives
\vspace{-1mm}
\begin{equation}
	H_{r+1}D_\theta u_{r+1}
	+
	A_{r+1}^\top D_\theta\bar\eta_{r+1}
	+
	C_{r+1}^\top D_\theta\mu_{r+1}
	+
	B_{r+1}
	=0,
	\label{eq:alm_sens_dF1}
\end{equation}
where
$
H_{r+1}
:=
\nabla_{uu}^2
\left(
\ell
+
\bar\eta_{r+1}^\top a
+
\mu_{r+1}^\top\phi
\right),
\quad
B_{r+1}
:=
\nabla_{u\theta}^2
\left(
\ell
+
\bar\eta_{r+1}^\top a
+
\mu_{r+1}^\top\phi
\right),
$
with all derivatives evaluated at
\((u_{r+1},\theta,\bar\eta_{r+1},\mu_{r+1})\).
Differentiating the shifted active-constraint relation gives
\vspace{-1mm}
\begin{equation}
	A_{r+1}D_\theta u_{r+1}
	-
	\rho_r^{-1}D_\theta\bar\eta_{r+1}
	+
	D_{r+1}
	=0,
	\label{eq:alm_sens_dF2}
\end{equation}
where
$
D_{r+1}
:=
\nabla_\theta a(u_{r+1},\theta).
$

Finally, since \(\mathbf U\) is independent of \(\theta\), differentiating
\(\phi(u_{r+1})=0\) gives
\begin{equation}
	C_{r+1}D_\theta u_{r+1}
	=
	0.
	\label{eq:alm_sens_dF3}
\end{equation}
Stacking Eqs.~(\ref{eq:alm_sens_dF1})--(\ref{eq:alm_sens_dF3}) yields
\begin{equation}
	\begin{bmatrix}
		H_{r+1} & A_{r+1}^\top & C_{r+1}^\top\\
		A_{r+1} & -\rho_r^{-1}I & 0\\
		C_{r+1} & 0 & 0
	\end{bmatrix}
	\begin{bmatrix}
		D_\theta u_{r+1}\\
		D_\theta\bar\eta_{r+1}\\
		D_\theta\mu_{r+1}
	\end{bmatrix}
	=
	-
	\begin{bmatrix}
		B_{r+1}\\
		D_{r+1}\\
		0
	\end{bmatrix}.
	\label{eq:lapanda_sensitivity_a}
\end{equation}

\subsection{Proof of Theorem~\ref{thm:sensitivity_alignment}}
\label{app:proof_sensitivity_alignment}

We prove Theorem~\ref{thm:sensitivity_alignment}. Let
\begin{equation}
	d^\star
	:=
	\begin{bmatrix}
		D_\theta u^\star\\
		D_\theta\eta^\star\\
		D_\theta\mu^\star
	\end{bmatrix},
	\qquad
	d_{r+1}
	:=
	\begin{bmatrix}
		D_\theta u_{r+1}\\
		D_\theta\bar\eta_{r+1}\\
		D_\theta\mu_{r+1}
	\end{bmatrix}.
	\label{eq:app_sensitivity_vectors}
\end{equation}
The original active-set KKT sensitivity system and the reduced \solver{lapanda}
sensitivity system can be written compactly as
\begin{equation}
	Kd^\star=-\beta,
	\qquad
	K_{r+1}d_{r+1}=-\beta_{r+1},
	\label{eq:app_compact_sensitivity_systems}
\end{equation}
where
\[
K
=
\begin{bmatrix}
	H & A^\top & C^\top\\
	A & 0 & 0\\
	C & 0 & 0
\end{bmatrix},
\qquad
K_{r+1}
=
\begin{bmatrix}
	H_{r+1} & A_{r+1}^\top & C_{r+1}^\top\\
	A_{r+1} & -\rho_r^{-1}I & 0\\
	C_{r+1} & 0 & 0
\end{bmatrix},
\label{eq:app_sensitivity_matrices}
\]
and
\[
\beta
=
\begin{bmatrix}
	B\\
		D\\
	0
\end{bmatrix},
\qquad
\beta_{r+1}
=
\begin{bmatrix}
	B_{r+1}\\
		D_{r+1}\\
	0
\end{bmatrix}.
\label{eq:app_sensitivity_rhs}
\]
The third block is zero because the active manifold
\(\phi(u)=0\) of the simple set \(\mathbf U\) is independent of
\(\theta\). If \(\mathbf U\) depends on \(\theta\), the corresponding
blocks are replaced by \(E\) and \(E_{r+1}\).

Under Assumption~\ref{ass:local_regular}, strict complementarity implies local
active-set identification. Hence, for all sufficiently advanced ALM iterates,
the active manifolds used in
Eqs.~(\ref{eq:original_kkt_sensitivity}) and
(\ref{eq:lapanda_sensitivity_a}) coincide locally. Moreover, LICQ and SOSC
imply that the original active-set KKT matrix \(K\) is nonsingular.

We next compare the two linear systems. By Theorem~\ref{thm:lapanda_wellposed}, the evaluated quantities
\((u_{r+1},\bar\eta_{r+1},\mu_{r+1})\) remain in a neighborhood of
\((u^\star,\eta^\star,\mu^\star)\). Since the active functions are
\(C^{2,1}\), the Jacobian and Lagrangian-Hessian blocks are locally Lipschitz
in the primal-dual variables. Therefore,
\begin{equation}
	H_{r+1}-H=O(\epsilon_{r+1}),
	\qquad
	A_{r+1}-A=O(\epsilon_{r+1}),
	\qquad
	C_{r+1}-C=O(\epsilon_{r+1}),
	\label{eq:app_sensitivity_block_errors}
\end{equation}
where
$
\epsilon_{r+1}
:=
\|u_{r+1}-u^\star\|
+
\|\bar\eta_{r+1}-\eta^\star\|
+
\|\mu_{r+1}-\mu^\star\|.
$

The finite-penalty block is the only structural difference between the two
systems. Therefore,
\begin{equation}
	K_{r+1}
	=
		K+\Delta_{\rho_r}+\mathcal E_{r+1},
	\qquad
	\Delta_{\rho_r}
	=
	\begin{bmatrix}
		0 & 0 & 0\\
		0 & -\rho_r^{-1}I & 0\\
		0 & 0 & 0
	\end{bmatrix},
	\qquad
		\|\mathcal E_{r+1}\|=O(\epsilon_{r+1}).
	\label{eq:app_Kr_perturbation}
\end{equation}
\begin{subequations}
	\label{eq:app_error}
	Consequently,
	\begin{equation}
		\|K_{r+1}-K\|
		=
		O(\rho_r^{-1})+O(\epsilon_{r+1}).
		\label{eq:app_Kr_error}
	\end{equation}
	The same \(C^{2,1}\) regularity yields
	\begin{equation}
		\|\beta_{r+1}-\beta\|
		=
		O(\epsilon_{r+1}).
		\label{eq:app_rhs_error}
	\end{equation}
	
Since \(K\) is nonsingular, the perturbation estimate in
Eq.~(\ref{eq:app_Kr_error}) implies that \(K_{r+1}\) is nonsingular whenever
	\(\rho_r\) is sufficiently large and \(\epsilon_{r+1}\) is sufficiently small.
	Moreover, its inverse is locally uniformly bounded: there exists
	\(\kappa_K>0\) such that
	\begin{equation}
		\|K_{r+1}^{-1}\|
		\le
		\kappa_K.
		\label{eq:app_Kr_inverse_bound}
	\end{equation}
\end{subequations}

Subtracting the two systems in
Eq.~(\ref{eq:app_compact_sensitivity_systems}) gives
\begin{equation*}
	K_{r+1}(d_{r+1}-d^\star)
	=
	-(\beta_{r+1}-\beta)
	-
	(K_{r+1}-K)d^\star.
	\label{eq:app_sensitivity_system_difference}
\end{equation*}
Hence,
\begin{equation*}
	d_{r+1}-d^\star
	=
	-K_{r+1}^{-1}(\beta_{r+1}-\beta)
	-
	K_{r+1}^{-1}(K_{r+1}-K)d^\star.
	\label{eq:app_sensitivity_solution_difference}
\end{equation*}
Using Eqs.~(\ref{eq:app_Kr_error}), (\ref{eq:app_rhs_error}), and
(\ref{eq:app_Kr_inverse_bound}), we obtain
\begin{equation}
	\|d_{r+1}-d^\star\|
	\le
	\kappa_K
	\left(
	\|\beta_{r+1}-\beta\|
	+
	\|K_{r+1}-K\|\,\|d^\star\|
	\right)
	=
	O(\rho_r^{-1})+O(\epsilon_{r+1}).
	\label{eq:app_full_sensitivity_error}
\end{equation}
Taking the first block yields
\begin{equation}
	D_\theta u_{r+1}
	=
	D_\theta u^\star
	+
	O(\rho_r^{-1})
	+
	O(\epsilon_{r+1}).
	\label{eq:app_primal_sensitivity_alignment}
\end{equation}

It remains to show that \(\epsilon_{r+1}\to0\) as the ALM outer iteration
converges. The local ALM convergence result in
Theorem~\ref{thm:lapanda_wellposed} gives
\begin{equation}
	u_{r+1}\to u^\star,
	\qquad
	\bar\eta_{r+1}\to\eta^\star.
	\label{eq:app_primal_dual_convergence}
\end{equation}

The multiplier \(\mu_{r+1}\) is associated with the identified active manifold of
\(\mathbf U\). The local stationarity condition of the final reduced
subproblem is
\begin{equation}
	\nabla_u\ell(u_{r+1},\theta)
	+
	A_{r+1}^\top\bar\eta_{r+1}
	+
	C_{r+1}^\top\mu_{r+1}
	=
	0.
	\label{eq:app_final_subproblem_stationarity}
\end{equation}
By LICQ, the active-set multiplier is locally unique and bounded. Hence, \(\{\mu_{r+1}\}\) is bounded and admits convergent subsequences. Let \(\bar\mu\) be any accumulation point. Passing to the limit in Eq.~(\ref{eq:app_final_subproblem_stationarity}) gives
\begin{equation}
	\nabla_u\ell(u^\star,\theta)
	+
	A^\top\eta^\star
	+
	C^\top\bar\mu
	=
	0.
	\label{eq:app_limit_stationarity}
\end{equation}
The original active-set KKT stationarity condition is
\begin{equation}
	\nabla_u\ell(u^\star,\theta)
	+
	A^\top\eta^\star
	+
	C^\top\mu^\star
	=
	0.
	\label{eq:app_original_stationarity_repeated}
\end{equation}
Subtracting Eq.~(\ref{eq:app_original_stationarity_repeated}) from
Eq.~(\ref{eq:app_limit_stationarity}) gives
\(C^\top(\bar\mu-\mu^\star)=0\). Since the active constraint gradients are
linearly independent, \(C\) has full row rank, and therefore
\(\bar\mu=\mu^\star\). Thus every accumulation point equals \(\mu^\star\), which implies
\begin{equation}
	\mu_{r+1}\to\mu^\star
	\qquad\text{and}\qquad
	\epsilon_{r+1}\to0.
	\label{eq:app_simple_multiplier_convergence}
\end{equation}

Therefore, as the ALM outer iteration converges, the
\(O(\epsilon_{r+1})\) term in
Eq.~(\ref{eq:app_primal_sensitivity_alignment}) vanishes. If the \solver{PANDA} inner
subproblems are solved inexactly, the same conclusion holds provided that their
stationarity residuals converge to zero. If, in addition,
\(\rho_r\to\infty\), then the finite-penalty term
\(O(\rho_r^{-1})\) also vanishes, and hence
\begin{equation}
	D_\theta u_{r+1}
	\longrightarrow
	D_\theta u^\star.
	\label{eq:app_sensitivity_convergence}
\end{equation}

\subsection{Matrix-Free Operators}
\label{app:matrix_free_operators}

We derive the matrix-free backward operators for the final reduced
\solver{lapanda} subproblem from the residual equation
\begin{equation}
	R_{\gamma,r}(u,\theta)
	=
	\frac{1}{\gamma}
	\left[
	u-
	\Pi_{\mathbf U}
	\left(
	u-\gamma\nabla_u\psi_{\rho_r}(u;\theta,\lambda_r)
	\right)
	\right].
	\label{eq:mf_residual}
\end{equation}
At the returned solution \(u_{r+1}\), we have
\(R_{\gamma,r}(u_{r+1},\theta)\approx0\). Instead of explicitly forming
\(D_\theta u_{r+1}\), the backward pass solves the adjoint system
\begin{equation}
	\left[
	\nabla_uR_{\gamma,r}(u_{r+1},\theta)
	\right]^\top
	v_{\mathrm{adj}}
	=
		\nabla_u\mathcal L_{\mathrm{out}}(u_{r+1},\theta)^\top,
	\label{eq:mf_adjoint}
\end{equation}
and then computes
\begin{equation}
		\nabla_\theta\mathcal L_{\mathrm{out}}
		=
		\nabla_\theta\mathcal L_{\mathrm{out}}(u_{r+1},\theta)
	-
	v_{\mathrm{adj}}^\top
	\nabla_\theta R_{\gamma,r}(u_{r+1},\theta).
	\label{eq:mf_outer_grad}
\end{equation}
Krylov iterative methods solve the adjoint linear system using only
matrix--vector products with its coefficient operator. Therefore, the backward
pass only requires the two matrix-free products
\[
\left[\nabla_uR_{\gamma,r}\right]^\top v_{\mathrm{adj}},
\qquad
v_{\mathrm{adj}}^\top\nabla_\theta R_{\gamma,r}.
\]

Let
\[
y_{\gamma,r+1}
=
u_{r+1}
-
\gamma\nabla_u\psi_{\rho_r}
(u_{r+1};\theta,\lambda_r),
\qquad
P_{U,r+1}
=
D\Pi_{\mathbf U}(y_{\gamma,r+1}).
\]
When \(\mathbf U\) is a box, \(P_{U,r+1}\) is a diagonal active-set mask that
retains the free components and removes the active components. Since
\(\mathbf U\) is independent of \(\theta\), differentiating
Eq.~(\ref{eq:mf_residual}) gives
\begin{equation}
	\nabla_uR_{\gamma,r}
	=
	\frac{1}{\gamma}(I-P_{U,r+1})
	+
	P_{U,r+1}\nabla_{uu}^2\psi_{\rho_r},
	\qquad
	\nabla_\theta R_{\gamma,r}
	=
	P_{U,r+1}\nabla_{u\theta}^2\psi_{\rho_r}.
	\label{eq:mf_residual_derivatives}
\end{equation}
Therefore, the two products required by
Eqs.~(\ref{eq:mf_adjoint}) and (\ref{eq:mf_outer_grad}) are
\begin{equation}
	\left[
	\nabla_uR_{\gamma,r}
	\right]^\top v_{\mathrm{adj}}
	=
	\frac{1}{\gamma}
	(I-P_{U,r+1})^\top v_{\mathrm{adj}}
	+
	\nabla_{uu}^2\psi_{\rho_r}
	\left(
	P_{U,r+1}^\top v_{\mathrm{adj}}
	\right),
	\label{eq:mf_residual_transpose_product}
\end{equation}
and
\begin{equation}
	v_{\mathrm{adj}}^\top\nabla_\theta R_{\gamma,r}
	=
	\left(
	P_{U,r+1}^\top v_{\mathrm{adj}}
	\right)^\top
	\nabla_{u\theta}^2\psi_{\rho_r}.
	\label{eq:mf_residual_theta_product}
\end{equation}
Hence, it remains to evaluate the smooth-part Hessian-vector product
\(\nabla_{uu}^2\psi_{\rho_r}v\) and the mixed vector-Jacobian product
\(q^\top\nabla_{u\theta}^2\psi_{\rho_r}\).

Recall the reduced ALM objective in
Eq.~(\ref{eq:reduced_alm_objective}). At \(u_{r+1}\), define
\[
w_{r+1}
=
G(u_{r+1},\theta)+\rho_r^{-1}\lambda_r,
\qquad
z_{r+1}
=
\Pi_{\mathcal D}(w_{r+1}),
\qquad
	\bar\lambda_{r+1}
	=
	\rho_r(w_{r+1}-z_{r+1}).
\]
Also define
\[
J_{G,r+1}
=
\nabla_uG(u_{r+1},\theta),
\qquad
P_{D,r+1}
=
D\Pi_{\mathcal D}(w_{r+1}),
\qquad
S_{D,r+1}
=
I-P_{D,r+1}.
\]
For
\(\mathcal D=\{0\}^{n_c}\times\mathbb R_-^{n_h}\), the mask \(S_{D,r+1}\)
retains the equality components and the locally active inequality components,
while removing the inactive inequality components.

The gradient of the reduced smooth objective is
\begin{equation}
	\nabla_u\psi_{\rho_r}
	(u_{r+1};\theta,\lambda_r)
	=
	\nabla_u\ell(u_{r+1},\theta)
	+
	J_{G,r+1}^\top\bar\lambda_{r+1}.
	\label{eq:mf_smooth_grad}
\end{equation}
Since
$
D_u\bar\lambda_{r+1}[v]
=
\rho_rS_{D,r+1}J_{G,r+1}v,
$
the Hessian-vector product satisfies, for any vector \(v\),
\begin{equation}
	\nabla_{uu}^2\psi_{\rho_r}v
	=
	\nabla_{uu}^2
	\left(
	\ell+\bar\lambda_{r+1}^\top G
	\right)v
	+
	\rho_r
	J_{G,r+1}^\top S_{D,r+1}J_{G,r+1}v.
	\label{eq:mf_smooth_hvp}
\end{equation}
In the first term, \(\bar\lambda_{r+1}\) is held fixed. Thus, this term is a
Lagrangian Hessian-vector product. The second term requires only one JVP
through \(G\), one application of the active-set mask \(S_{D,r+1}\), and one VJP
through \(G\).

During differentiation of the final ALM subproblem, the incoming ALM
multiplier \(\lambda_r\) and the penalty parameter \(\rho_r\) are treated as
fixed. Since
$
D_\theta\bar\lambda_{r+1}[\delta\theta]
=
\rho_rS_{D,r+1}
\nabla_\theta G(u_{r+1},\theta)\delta\theta,
$
the mixed product satisfies
\begin{equation}
	\nabla_{u\theta}^2\psi_{\rho_r}\delta\theta
	=
	\nabla_{u\theta}^2
	\left(
	\ell+\bar\lambda_{r+1}^\top G
	\right)\delta\theta
	+
	\rho_r
	J_{G,r+1}^\top S_{D,r+1}
	\nabla_\theta G(u_{r+1},\theta)\delta\theta.
	\label{eq:mf_smooth_cross}
\end{equation}
Equivalently, for any adjoint vector \(q\),
\begin{equation}
	q^\top\nabla_{u\theta}^2\psi_{\rho_r}
	=
	q^\top
	\nabla_{u\theta}^2
	\left(
	\ell+\bar\lambda_{r+1}^\top G
	\right)
	+
	\rho_r
	\left(
	S_{D,r+1}J_{G,r+1}q
	\right)^\top
	\nabla_\theta G(u_{r+1},\theta).
	\label{eq:mf_smooth_cross_vjp}
\end{equation}

Let \(q=P_{U,r+1}^\top v_{\mathrm{adj}}\). Substituting
Eq.~(\ref{eq:mf_smooth_hvp}) into
Eq.~(\ref{eq:mf_residual_transpose_product}), the adjoint operator used in the
Krylov solver is
\begin{equation}
	\left[
	\nabla_uR_{\gamma,r}
	\right]^\top v_{\mathrm{adj}}
	=
	\frac{1}{\gamma}
	(I-P_{U,r+1})^\top v_{\mathrm{adj}}
	+
	\nabla_{uu}^2
	\left(
	\ell+\bar\lambda_{r+1}^\top G
	\right)q
	+
	\rho_r
	J_{G,r+1}^\top S_{D,r+1}J_{G,r+1}q.
	\label{eq:mf_final_adjoint_operator}
\end{equation}
The smooth Hessian term and the projected-constraint term
\(J_{G,r+1}^\top S_{D,r+1}J_{G,r+1}\) are symmetric under the local regularity
assumptions. However, their composition with the projection derivative
\(P_{U,r+1}\), through \(q=P_{U,r+1}^\top v_{\mathrm{adj}}\), does not generally preserve
the symmetry of the complete residual operator. Hence, GMRES provides the
general Krylov solver. For the common case in which \(\mathbf U\) consists
of box constraints and the active set is locally identified,
\(P_{U,r+1}\) is a symmetric diagonal projector. Its range
\(\mathcal V_{r+1}:=\operatorname{range}(P_{U,r+1})\) is the tangent space of
the identified box face. Restricting the adjoint system to this subspace gives
the projected operator
\(P_{U,r+1}\nabla_{uu}^2\psi_{\rho_r}P_{U,r+1}\). For every nonzero
\(d\in\mathcal V_{r+1}\), Eq.~(\ref{eq:alm_pd_reference}), together with the
slack elimination leading to Eq.~(\ref{eq:app_reduced_quadratic_growth}), gives
\begin{equation}
	d^\top P_{U,r+1}\nabla_{uu}^2\psi_{\rho_r}P_{U,r+1}d
	=
	d^\top\nabla_{uu}^2\psi_{\rho_r}d
	>
	0.
	\label{eq:mf_projected_operator_pd}
\end{equation}
Thus, locally under SOSC and a sufficiently large ALM penalty, the restricted
operator is symmetric positive definite, so we use CG by default. If
nonpositive curvature or numerical failure is detected, the implementation
automatically falls back to MINRES.

Substituting
Eq.~(\ref{eq:mf_smooth_cross_vjp}) into
Eq.~(\ref{eq:mf_residual_theta_product}), the parameter-side VJP is
\begin{equation}
	v_{\mathrm{adj}}^\top\nabla_\theta R_{\gamma,r}
	=
	q^\top
	\nabla_{u\theta}^2
	\left(
	\ell+\bar\lambda_{r+1}^\top G
	\right)
	+
	\rho_r
	\left(
	S_{D,r+1}J_{G,r+1}q
	\right)^\top
	\nabla_\theta G(u_{r+1},\theta).
	\label{eq:mf_final_theta_vjp}
\end{equation}
Eqs.~(\ref{eq:mf_final_adjoint_operator}) and
(\ref{eq:mf_final_theta_vjp}) give the final matrix-free backward operators.
Their evaluation requires only HVPs, JVPs, VJPs, and active-set masks
associated with \(\Pi_{\mathbf U}\) and \(\Pi_{\mathcal D}\), without
explicitly constructing Hessian, Jacobian, or KKT matrices.
\vspace{-3mm}
\section{Experiments Details}
\vspace{-2mm}
\subsection{Constrained Rosenbrock Benchmark}
\vspace{-2mm}
\label{app:rosenbrock_experiment_details}
This section examines the sensitivity-accuracy behavior characterized by
Theorem~\ref{thm:sensitivity_alignment} and the practical strategies used to
control it, thereby explaining the construction, settings, and
accuracy-matching protocol of Table~\ref{tab:lapanda_casadi_benchmark}.

\noindent\textbf{Experimental setup.}
This benchmark and the constrained OCP imitation experiments
were conducted on a workstation equipped with an Intel Core i5-12600KF CPU@3.70 GHz, 32 GB of RAM, and an NVIDIA GeForce RTX 4060Ti GPU. The learnable parameter is \(\theta=(\theta_0,\theta_1,\theta_2,\theta_3)\), with nominal value
\((10,1,10^{-3},1.35)\), and the outer sensitivity loss is
\(\mathcal L_{\mathrm{out}}(x^\star)=\frac{1}{2}\|x^\star-x_{\mathrm{tar}}\|_2^2\), where
\(x_{\mathrm{tar}}\) is the sinusoidal target used in the benchmark. The solver settings are summarized in
Table~\ref{tab:app_rosenbrock_settings}.

\vspace{-4mm}
\begin{table}[h]
	\centering
	\caption{Solver settings for the constrained Rosenbrock benchmark.}
	\label{tab:app_rosenbrock_settings}
	\begingroup
	\renewcommand{\arraystretch}{1.1}
	\resizebox{0.6\linewidth}{!}{%
		\begin{tabular}{lc}
			\toprule
			Quantity & Value \\
			\midrule
			CasADi forward solver & IPOPT \\
			CasADi sensitivity pipeline & SQP/qpOASES + sparse QR \\
			Forward ALM/IPOPT tolerance & \(10^{-3}\) \\
			Krylov tolerance & \(10^{-2}\) \\
			lapanda/Explicit Krylov method & CG/MINRES \\
			Maximum Krylov iterations & \(200\) \\
			Maximum PANDA iterations & \(4000\) \\
			Maximum ALM iterations & \(20\) \\
			Initial penalty \(\rho_0\) & \(2\) \\
			Penalty update factor & \(10\) \\
			\bottomrule
		\end{tabular}%
	}
	\endgroup
\end{table}
\vspace{-2mm}

Following the official \solver{CasADi} sensitivity workflow, we first solved the original NLP with IPOPT and used the converged primal solution to initialize a differentiable \texttt{sqpmethod} solver. Since this initialization already satisfied the prescribed tolerance, the SQP stage terminated after one iteration. During this iteration, \texttt{qpOASES} computed the information required for active-set identification. \solver{CasADi} then constructed the differentiated KKT system using the exact Lagrangian Hessian and constraint Jacobian and solved the adjoint system with its default sparse-QR sensitivity solver. We also tested \texttt{qrqp} as the QP backend of \texttt{sqpmethod}, but it occasionally failed on larger instances. For the problem sizes successfully solved by both backends, their runtime and memory overheads were similar. We therefore report results obtained with the \texttt{qpOASES}-based configuration.

\noindent\textbf{Sensitivity reference and error metrics.}
To obtain a reference sensitivity for each solution returned by \solver{lapanda}, we construct the KKT sensitivity
system of the original problem and solve it to high accuracy. The
relative linear residual is below \(10^{-15}\) in all reported cases.
Let \(g=\nabla_\theta\mathcal L_{\mathrm{out}}^{\mathrm{lapanda}}\) and let
\(g_{\mathrm{ref}}\) denote the reference gradient. We report both relative
error and cosine similarity:
\begin{equation}
	\mathrm{Err}_{\nabla}
	=
	\frac{\|g-g_{\mathrm{ref}}\|_2}{\|g_{\mathrm{ref}}\|_2},
	\qquad
	\mathrm{Cos}_{\nabla}
	=
	\frac{g^\top g_{\mathrm{ref}}}
	{\|g\|_2\|g_{\mathrm{ref}}\|_2}.
	\label{eq:app_gradient_metrics}
\end{equation}
The former measures the full gradient discrepancy, whereas the latter isolates
directional agreement. Constraint violation is measured by
\(\max_i[g_i(x,\theta)]_+\).

\noindent\textbf{Relationship between penalty and sensitivity error.}
The analysis in Section~4.2 gives the error order
\(O(\rho_r^{-1})+O(\epsilon_{r+1})\). To examine this relation, we solve one
ALM subproblem for the representative \(n=200\) instance at each fixed
penalty. Table~\ref{tab:app_penalty_sweep} shows that increasing \(\rho\)
improves feasibility and gradient accuracy, while making the adjoint system
more expensive.
\vspace{-4mm}

\begin{table}[h]
	\centering
	\caption{Penalty sweep for the \(n=200\) constrained Rosenbrock diagnostic.}
	\label{tab:app_penalty_sweep}
	\renewcommand{\arraystretch}{1.1}
	\begin{tabular}{ccccc}
		\toprule
		Penalty \(\rho\)
		& Relative error \textcolor{green!60!black}{\(\searrow\)}
		& Cosine similarity
		& Violation \textcolor{green!60!black}{\(\searrow\)}
		& Backward time (ms) \textcolor{red!70!black}{\(\nearrow\)} \\
		\midrule
		\(1\) & \(5.59\times10^{-1}\) & \(0.988404\) & \(2.47\times10^{-1}\) & \(0.15\) \\
		\(3\) & \(2.91\times10^{-1}\) & \(0.998718\) & \(1.28\times10^{-1}\) & \(0.15\) \\
		\(10\) & \(1.14\times10^{-1}\) & \(0.999793\) & \(4.67\times10^{-2}\) & \(0.18\) \\
		\(30\) & \(4.15\times10^{-2}\) & \(0.999967\) & \(1.84\times10^{-2}\) & \(0.22\) \\
		\(100\) & \(1.34\times10^{-2}\) & \(0.999994\) & \(6.74\times10^{-3}\) & \(0.31\) \\
		\(300\) & \(4.80\times10^{-3}\) & \(0.999999\) & \(2.32\times10^{-3}\) & \(0.53\) \\
		\(1000\) & \(6.93\times10^{-4}\) & \(1.000000\) & \(7.02\times10^{-4}\) & \(0.76\) \\
		\(3000\) & \(5.64\times10^{-4}\) & \(1.000000\) & \(2.34\times10^{-4}\) & \(0.99\) \\
		\bottomrule
	\end{tabular}
\end{table}
\vspace{-2mm}
\noindent\textbf{Balanced strategy.}
Our default strategy directly reuses the primal solution, multiplier estimate,
and penalty returned by the converged forward ALM solve. The backward system
therefore corresponds to the same final subproblem, with no additional tuning
or forward optimization. Table~\ref{tab:app_converged_penalty} reports means
over ten instances at each problem size. Although the relative error is about
\(5\%\)--\(10\%\), the cosine similarity consistently exceeds \(0.9999\).
Thus, the finite-penalty approximation mainly affects the gradient magnitude,
while preserving its descent direction to high accuracy. This balanced
strategy retains the millisecond-scale backward pass and is used in the later
learning experiments, where it produces loss trajectories comparable to the
baselines.

\vspace{-4mm}
\begin{table}[h]
	\centering
	\caption{Balanced sensitivity computation using the forward-converged penalty.}
	\label{tab:app_converged_penalty}
	\begin{tabular}{cccccc}
		\toprule
		\(n\) & Final penalty & Relative error & Cosine similarity & Violation & Backward time (ms) \\
		\midrule
		\(100\) & \(4.67\times10^2\) & \(7.08\times10^{-2}\) & \(0.999925\) & \(7.99\times10^{-4}\) & \(0.15\) \\
		\(200\) & \(4.59\times10^2\) & \(9.42\times10^{-2}\) & \(0.999910\) & \(5.75\times10^{-4}\) & \(0.25\) \\
		\(500\) & \(3.24\times10^2\) & \(9.89\times10^{-2}\) & \(0.999922\) & \(6.18\times10^{-4}\) & \(0.62\) \\
		\(1000\) & \(4.53\times10^2\) & \(9.87\times10^{-2}\) & \(0.999932\) & \(6.86\times10^{-4}\) & \(1.54\) \\
		\bottomrule
	\end{tabular}
\end{table}
\vspace{-2mm}
\noindent\textbf{Post-forward penalty refinement.}
When higher relative accuracy is required, we consider two post-forward
refinement strategies. \emph{Direct backward refinement} retains the converged
primal-dual estimate and uses an enlarged penalty only in the backward
operators. It requires no additional forward optimization, but the modified
backward system is not exactly aligned with the subproblem solved in the
forward pass. \emph{Aligned subproblem refinement} instead increases the
penalty, re-solves the final ALM subproblem from the converged state, and then
differentiates the refined subproblem. This preserves the alignment in
Theorem~\ref{thm:sensitivity_alignment}, at the cost of one additional warm-started
subproblem solve. Table~\ref{tab:app_penalty_refinement} compares the two
strategies using \(10\rho\) on the first instance at each problem size.

\vspace{-4mm}
\begin{table}[H]
	\centering
	\caption{Comparison of post-forward penalty-refinement strategies.}
	\label{tab:app_penalty_refinement}
	\setlength{\tabcolsep}{3.5pt}
	\resizebox{\linewidth}{!}{
		\begin{tabular}{c|ccc|ccc|cccc}
			\toprule
			\multirow{2}{*}{\(n\)}
			& \multicolumn{3}{c|}{Forward-converged}
			& \multicolumn{3}{c|}{Direct backward refinement}
			& \multicolumn{4}{c}{Aligned subproblem refinement} \\
			\cmidrule(lr){2-4}
			\cmidrule(lr){5-7}
			\cmidrule(lr){8-11}
			& \(\rho\) & Grad. err. & Bwd. (ms)
			& \(\rho\) & Grad. err. & Bwd. (ms)
			& \(\rho\) & Grad. err. & Refine solve (ms) & Bwd. (ms) \\
			\midrule
		\(100\)
		& \(4.12\times10^2\) & \(7.07\times10^{-2}\) & \(0.15\)
		& \(4.12\times10^3\) & \(1.13\times10^{-2}\) & \(0.26\)
		& \(4.12\times10^3\) & \(9.29\times10^{-3}\) & \(3.71\) & \(0.27\) \\
		\(200\)
		& \(2.00\times10^2\) & \(9.54\times10^{-2}\) & \(0.23\)
		& \(2.00\times10^3\) & \(2.42\times10^{-2}\) & \(0.43\)
		& \(2.00\times10^3\) & \(1.37\times10^{-2}\) & \(4.17\) & \(0.44\) \\
		\(500\)
		& \(2.27\times10^2\) & \(9.97\times10^{-2}\) & \(0.58\)
		& \(2.27\times10^3\) & \(1.31\times10^{-2}\) & \(1.16\)
		& \(2.27\times10^3\) & \(1.31\times10^{-2}\) & \(11.30\) & \(1.20\) \\
		\(1000\)
		& \(2.36\times10^2\) & \(9.96\times10^{-2}\) & \(1.50\)
		& \(2.36\times10^3\) & \(1.24\times10^{-2}\) & \(3.04\)
		& \(2.36\times10^3\) & \(1.25\times10^{-2}\) & \(29.46\) & \(2.97\) \\
			\bottomrule
		\end{tabular}
	}
\end{table}

\vspace{-4mm}
As shown in Table~\ref{tab:app_penalty_refinement},
aligned refinement reduces the relative error below \(2\%\) at every
problem size. We therefore use it for the matched-accuracy comparison in the
main text. Direct refinement also improves accuracy without an additional
forward solve, but its backward system is not aligned with the
forward solution and therefore lacks the same theoretical justification.

These results distinguish two use cases. The forward-converged penalty is the
balanced default: its gradient direction is already highly accurate and no
extra computation is required. When higher relative gradient accuracy is
required, aligned refinement provides the principled option used in the main
benchmark. Direct refinement remains available as a low-cost heuristic knob.

It is worth noting that the subsequent imitation-learning experiments use the
forward-converged penalty by default. Although the resulting gradients have
moderate relative error, their high cosine similarity with the reference
gradient suggests that the update direction remains sufficiently accurate for
the learning tasks considered here. Such approximate gradients can be
substantially cheaper to compute, and some baselines likewise employ
approximate sensitivity methods. The broadly similar loss trends observed in
the subsequent experiments provide empirical support for this strategy.

\noindent\textbf{Matched-accuracy comparison and explicit-KKT ablation.}
Table~\ref{tab:lapanda_casadi_benchmark} uses ten instances at each problem
size. Gradient accuracy is evaluated by Eq.~(\ref{eq:app_gradient_metrics})
against the same high-accuracy original-NLP KKT gradient at the aligned
\solver{lapanda} point for each instance. For all three methods,
the timings are averaged over five repetitions on each of the same ten
instances, following an untimed warm-up at each problem size. For
\solver{lapanda}, we apply aligned
refinement with \(10\rho\). Its reported forward time contains only the original
ALM solve, whereas its backward time includes the warm-started final subproblem
refinement and the subsequent matrix-free adjoint solve.
\vspace{-4mm}
\begin{table}[h]
	\centering
	\caption{Mean (maximum) relative gradient error (\%) under the matched-accuracy protocol.}
	\label{tab:app_matched_accuracy}
	\setlength{\tabcolsep}{8pt}
	\begin{tabular}{cccc}
		\toprule
		\(n\) & lapanda & Explicit KKT & CasADi \\
		\midrule
		\(100\)  & \(0.95\;(1.16)\) & \(1.09\;(2.32)\) & \(0.02\;(0.03)\) \\
		\(200\)  & \(1.35\;(1.46)\) & \(0.68\;(0.74)\) & \(0.01\;(0.02)\) \\
		\(500\)  & \(1.30\;(1.35)\) & \(0.23\;(0.23)\) & \(0.43\;(0.43)\) \\
		\(1000\) & \(1.24\;(1.26)\) & \(0.33\;(0.33)\) & \(<0.01\;(<0.01)\) \\
		\bottomrule
	\end{tabular}
\end{table}

\vspace{-2mm}
The explicit-KKT baseline shares the unrefined \solver{lapanda} forward result,
explicitly assembles the KKT matrix of the original NLP, and solves the
adjoint system with MINRES through the same C-based Krylov interface and
tolerance. MINRES is used because the symmetric saddle-point KKT matrix is
generally indefinite. It therefore provides a matrix-based sensitivity baseline
under the same base forward solve.

Memory overhead is evaluated using the resident set size (RSS) of the solver
process. We consider both the memory increase during solver construction and
the temporary memory used during solution. Solver construction mainly includes
problem initialization, computational-graph construction, and operator
generation, whereas temporary memory accounts for intermediate variables and
workspaces allocated during solving. The peak temporary increment typically
occurs within the first few solves, since later calls can reuse previously
allocated memory. We therefore report
\begin{equation}
	\text{total memory}
	=
	\text{build RSS peak increment}
	+
	\text{peak solve RSS increment}.
	\label{eq:app_total_memory_measurement}
\end{equation}

\vspace{-4mm}
\subsection{Constrained OCP Imitation Experiments}
\label{app:ocp_experiment_details}
\textbf{Common setup.}
All OCP experiments use a prediction horizon of \(N=20\) and a sampling
interval of \(\Delta t=0.05\). We adopt the single-shooting transcription
described in Appendix~\ref{app:ocp_transcription}. Thus, the optimization
variable is the stacked control sequence
$
u=(u_0^\top,\ldots,u_{N-1}^\top)^\top,
$
whereas the state trajectory is obtained by recursively rolling out the
dynamics. The teacher trajectory is generated using the target parameter
\(\theta_{\mathrm{true}}\), and the learner minimizes the imitation loss in
Eq.~(\ref{eq:imit_loss}).

\noindent\textbf{CartPole.}
For the CartPole task, the system state is defined as
$x_k=(p_k,\dot p_k,\phi_k,\dot\phi_k)$, where $p_k$ and $\dot p_k$
denote the cart position and velocity, while $\phi_k$ and $\dot\phi_k$
denote the pole angle and angular velocity, respectively. The control input,
denoted by $F_k$, is the horizontal force. Using forward
Euler discretization, the CartPole dynamics are given by
\begin{equation}
	\begin{aligned}
		\ddot{\phi}_k
		&=
		\tfrac{
			g\sin\phi_k
			-
			\tfrac{
				\cos\phi_k
				\left(
				F_k
				+
				m_p l\dot{\phi}_k^2\sin\phi_k
				\right)
			}{
				m_c+m_p
			}
		}{
			l\left(
			\tfrac{4}{3}
			-
			\tfrac{
				m_p\cos^2\phi_k
			}{
				m_c+m_p
			}
			\right)
		},
		\\
		\ddot{p}_k
		&=
		\tfrac{
			F_k
			+
			m_p l
			\left(
			\dot{\phi}_k^2\sin\phi_k
			-
			\ddot{\phi}_k\cos\phi_k
			\right)
		}{
			m_c+m_p
		},
		\\
		p_{k+1}
		&=
		p_k+\Delta t\,\dot{p}_k,
		\\
		\dot{p}_{k+1}
		&=
		\dot{p}_k+\Delta t\,\ddot{p}_k,
		\\
		\phi_{k+1}
		&=
		\phi_k+\Delta t\,\dot{\phi}_k,
		\\
		\dot{\phi}_{k+1}
		&=
		\dot{\phi}_k+\Delta t\,\ddot{\phi}_k.
	\end{aligned}
	\label{eq:app_cartpole_dynamics}
\end{equation}
where \(m_c\) and \(m_p\) denote the masses of the cart and pole,
respectively, \(l\) is the pole half-length, \(g\) is the gravitational
acceleration, and \(\Delta t\) is the discretization step.

We consider a finite-horizon planning problem that steers the CartPole system
from an initial state $\xi$ toward the upright equilibrium
$x_{\mathrm{tar}}=(p_{\mathrm{tar}},\dot p_{\mathrm{tar}},
\phi_{\mathrm{tar}},\dot\phi_{\mathrm{tar}})=(0,0,0,0)$.
Define the learnable parameter vector
$\theta=(q_p,q_\phi,q_{\dot p},q_{\dot\phi},r_F)$ and
$Q_{\mathrm{cp}}(\theta)=\operatorname{diag}
(\theta_1,\theta_3,\theta_2,\theta_4)
=\operatorname{diag}(q_p,q_{\dot p},q_\phi,q_{\dot\phi})$ and
$\|z\|_{Q_{\mathrm{cp}}}^2=z^\top Q_{\mathrm{cp}}z$. The OCP is
\begin{subequations}
	\label{eq:app_cartpole_ocp}
	\begin{align}
		\min_{F_{0:N-1}}\quad
		&
		\begin{aligned}[t]
			&\sum_{k=0}^{N-1}
			\left(\|x_k-x_{\mathrm{tar}}\|_{Q_{\mathrm{cp}}}^2
			+r_FF_k^2\right) +10\|x_N-x_{\mathrm{tar}}\|_{Q_{\mathrm{cp}}}^2
		\end{aligned}
		\label{eq:app_cartpole_cost}
		\\
		\mathrm{s.t.}\quad
		&
		x_0=\xi,
		\label{eq:app_cartpole_initial}
		\\
		&
		x_{k+1}
		=
		f_{\mathrm{cp}}(x_k,F_k),
		\qquad
		k=0,\ldots,N-1,
		\label{eq:app_cartpole_dynamic}
		\\
		&
		-2\le F_k\le6,
		\qquad
		k=0,\ldots,N-1,
		\label{eq:app_cartpole_input}
		\\
		&
		\sum_{k=0}^{N-1}F_k^2\le70.
		\label{eq:app_cartpole_energy}
	\end{align}
\end{subequations}

Here, $f_{\mathrm{cp}}$ denotes the discrete dynamics defined in
Eq.~(\ref{eq:app_cartpole_dynamics}), with $m_c=1.0$, $m_p=0.1$,
$l=0.5$, $g=9.81$, and $\Delta t=0.05$. 
Here, $q_p$, $q_\phi$, $q_{\dot p}$, and $q_{\dot\phi}$ respectively
weight the cart position, pole angle, cart velocity, and pole angular-velocity
tracking errors, and $r_F$ penalizes the control effort.

For open-loop imitation, multiple initial states are sampled to evaluate the
robustness of the learned solution. The initial state
$\xi=(p_0,\dot p_0,\phi_0,\dot\phi_0)$ is sampled according to
\begin{equation}
	\begin{aligned}
		p_0
		&\sim
		\operatorname{Unif}[-0.5,0.5],
		&
		\dot p_0
		&\sim
		\operatorname{Unif}[-0.5,0.5],
		\\
		\phi_0
		&\sim
		\operatorname{Unif}[-\pi,\pi],
		&
		\dot\phi_0
		&\sim
		\operatorname{Unif}[-1,1].
	\end{aligned}
	\label{eq:app_cartpole_initial_sampling}
\end{equation}

\noindent\textbf{Planar quadrotor.}
For the planar quadrotor task, the system state is defined as
$x_k=(p_{x,k},p_{z,k},v_{x,k},v_{z,k},\alpha_k,\omega_k)$, where
$p_{x,k}$ and $p_{z,k}$ denote the horizontal and vertical positions,
$v_{x,k}$ and $v_{z,k}$ denote the corresponding velocities, and
$\alpha_k$ and $\omega_k$ denote the attitude angle and angular velocity,
respectively. The control input is $u_k=(T_k,\tau_k)$, where $T_k$ is the
collective thrust and $\tau_k$ is the rotational control input. The discrete
dynamics are given by
\begin{equation}
	\begin{aligned}
		v_{x,k+1}
		&=
		v_{x,k}
		-
		\Delta t\,T_k\sin\alpha_k,
		\\
		v_{z,k+1}
		&=
		v_{z,k}
		+
		\Delta t
		\left(
		T_k\cos\alpha_k-g
		\right),
		\\
		\omega_{k+1}
		&=
		\omega_k
		+
		4\Delta t\,\tau_k,
		\\
		\alpha_{k+1}
		&=
		\alpha_k
		+
		\Delta t\,\omega_{k+1},
		\\
		p_{x,k+1}
		&=
		p_{x,k}
		+
		\Delta t\,v_{x,k+1},
		\\
		p_{z,k+1}
		&=
		p_{z,k}
		+
		\Delta t\,v_{z,k+1}.
	\end{aligned}
	\label{eq:app_quadrotor_dynamics}
\end{equation}
We consider a finite-horizon planning problem that steers the quadrotor from
the initial state $\xi$ toward the target state
$x_{\mathrm{tar}}=(1.0,1.2,0,0,0,0)$. Define
$Q_{\mathrm{quad}}=\operatorname{diag}(q_p,q_p,q_v,q_v,q_\alpha,q_\omega)$,
$R_{\mathrm{quad}}=\operatorname{diag}(r_T,r_\tau)$, and
$u_{\mathrm{hov}}=(g,0)$. The OCP is
\begin{subequations}
	\label{eq:app_quadrotor_ocp}
	\begin{align}
		\min_{T_{0:N-1},\,\tau_{0:N-1}}\quad
		&
		\begin{aligned}[t]
			&\sum_{k=0}^{N-1}
			\left(
			\|x_k-x_{\mathrm{tar}}\|_{Q_{\mathrm{quad}}}^2
			+\|u_k-u_{\mathrm{hov}}\|_{R_{\mathrm{quad}}}^2
			\right)
			\\[-1mm]
			&\quad+15\|x_N-x_{\mathrm{tar}}\|_{Q_{\mathrm{quad}}}^2
		\end{aligned}
		\label{eq:app_quadrotor_cost}
		\\
		\mathrm{s.t.}\quad
		&
		x_0=\xi,
		\label{eq:app_quadrotor_initial}
		\\
		&
		x_{k+1}
		=
		f_{\mathrm{quad}}(x_k,T_k,\tau_k),
		\qquad
		k=0,\ldots,N-1,
		\label{eq:app_quadrotor_dynamic}
		\\
		&
		0\le T_k\le2.2g,
		\qquad
		-3\le\tau_k\le3,
		\qquad
		k=0,\ldots,N-1,
		\label{eq:app_quadrotor_input}
		\\
		&
		0.43\le p_{z,k}\le1.20,
		\qquad
		p_{x,k}\le1.0,
		\qquad
		k=0,\ldots,N,
		\label{eq:app_quadrotor_position}
		\\
		&
		|\alpha_k|\le0.34,
		\qquad
		k=0,\ldots,N.
		\label{eq:app_quadrotor_attitude}
	\end{align}
\end{subequations}
Here, $f_{\mathrm{quad}}$ denotes the discrete dynamics defined in
Eq.~(\ref{eq:app_quadrotor_dynamics}), with $g=9.81$ and $\Delta t=0.05$.
The thrust
regularization term is centered at $T_k=g$, corresponding to the nominal
hovering thrust of the normalized model.
The learnable parameter vector is
$\theta=(q_p,q_v,q_\alpha,q_\omega,r_T,r_\tau)$, where $q_p$,
$q_v$, $q_\alpha$, and $q_\omega$ respectively weight the position, velocity,
attitude-angle, and angular-velocity tracking errors, while $r_T$ and $r_\tau$
penalize the thrust and rotational control inputs.

For open-loop imitation, the initial state is sampled around
$\bar x_0=(-1.0,0.65,0,0,0.15,0):$
\begin{equation}
	\begin{aligned}
		p_{x,0}
		&=
		\bar p_{x,0}
		+
		\operatorname{Unif}[-0.25,0.25],
		&
		p_{z,0}
		&=
		\bar p_{z,0}
		+
		\operatorname{Unif}[-0.10,0.10],
		\\
		v_{x,0}
		&=
		\bar v_{x,0}
		+
		\operatorname{Unif}[-0.10,0.10],
		&
		v_{z,0}
		&=
		\bar v_{z,0}
		+
		\operatorname{Unif}[-0.10,0.10],
		\\
		\alpha_0
		&=
		\bar\alpha_0
		+
		\operatorname{Unif}[-0.08,0.08],
		&
		\omega_0
		&=
		\bar\omega_0
		+
		\operatorname{Unif}[-0.10,0.10].
	\end{aligned}
	\label{eq:app_quadrotor_initial_sampling}
\end{equation}
\noindent\textbf{Two-link robot arm.}
For the two-link robot-arm task, the system state is defined as
$q_k=(q_{1,k},q_{2,k})$, where $q_{1,k}$ and $q_{2,k}$ denote the two
joint angles. The control input is
$u_k=(\dot q_{1,k},\dot q_{2,k})$, where $\dot q_{1,k}$ and
$\dot q_{2,k}$ are the commanded joint velocities. The discrete joint
dynamics and the corresponding end-effector position are given by
\begin{equation}
	\begin{aligned}
		q_{k+1}
		&=
		q_k+\Delta t\,u_k,
		\\
		p_{\mathrm{ee}}(q_k)
		&=
		\begin{bmatrix}
			\cos q_{1,k}
			+
			0.8\cos(q_{1,k}+q_{2,k})
			\\
			\sin q_{1,k}
			+
			0.8\sin(q_{1,k}+q_{2,k})
		\end{bmatrix}.
	\end{aligned}
	\label{eq:app_robot_arm_dynamics}
\end{equation}

We consider a finite-horizon planning problem that steers the robot arm from
an initial joint configuration $\xi$ toward the target configuration
$q_{\mathrm{tar}}=(0.75,-0.65)$ while avoiding a circular obstacle in the
end-effector workspace. Define
\begin{equation*}
	\ell_{\mathrm{arm}}(q)
	=
	q_q\|q-q_{\mathrm{tar}}\|_2^2
	+q_{\mathrm{ee}}\|p_{\mathrm{ee}}(q)-p_{\mathrm{ee}}(q_{\mathrm{tar}})\|_2^2,
	\qquad
	R_{\mathrm{arm}}=\operatorname{diag}(r_1,r_2).
\end{equation*}
The OCP is
\vspace{-2mm}
\begin{subequations}
	\label{eq:app_robot_arm_ocp}
	\begin{align}
		\min_{u_{0:N-1}}\quad
		&\sum_{k=0}^{N-1}
		\left(\ell_{\mathrm{arm}}(q_k)+\|u_k\|_{R_{\mathrm{arm}}}^2\right)
		+20\ell_{\mathrm{arm}}(q_N)
		\label{eq:app_robot_arm_cost}
		\\
		\mathrm{s.t.}\quad
		&
		q_0=\xi,
		\label{eq:app_robot_arm_initial}
		\\
		&
		q_{k+1}
		=
		q_k+\Delta t\,u_k,
		\qquad
		k=0,\ldots,N-1,
		\label{eq:app_robot_arm_dynamic}
		\\
		&
		-3
		\le
		\dot q_{1,k}
		\le
		3,
		\qquad
		-3
		\le
		\dot q_{2,k}
		\le
		3,
		\qquad
		k=0,\ldots,N-1,
		\label{eq:app_robot_arm_input}
		\\
		&
		-1.10
		\le
		q_{1,k}
		\le
		0.75,
		\qquad
		-0.65
		\le
		q_{2,k}
		\le
		1.28,
		\qquad
		k=1,\ldots,N,
		\label{eq:app_robot_arm_joint}
		\\
		&
		(0.29+m_{\mathrm{obs}})^2
		-
		\left\|
		p_{\mathrm{ee}}(q_k)
		-
		\begin{bmatrix}
			1.30\\
			0.30
		\end{bmatrix}
		\right\|_2^2
		\le
		0,
		\qquad
		k=1,\ldots,N.
		\label{eq:app_robot_arm_obstacle}
	\end{align}
\end{subequations}
Here, the dynamics are defined in
Eq.~(\ref{eq:app_robot_arm_dynamics}), with $\Delta t=0.05$. The obstacle is centered at
$p_{\mathrm{obs}}=(1.30,0.30)$, and the constraint in
Eq.~(\ref{eq:app_robot_arm_obstacle}) requires the end effector to remain outside
a circle with radius $0.29+m_{\mathrm{obs}}$.
The learnable parameter vector is
$\theta=(q_q,q_{\mathrm{ee}},r_1,r_2,m_{\mathrm{obs}})$, where $q_q$
weights the joint-configuration tracking error, $q_{\mathrm{ee}}$ weights the
end-effector tracking error, $r_1$ and $r_2$ penalize the two commanded joint
velocities, and $m_{\mathrm{obs}}$ represents the learnable obstacle-safety
margin.

For open-loop imitation, the initial joint configuration is sampled according to
\begin{equation}
	q_{1,0}
	\sim
	\operatorname{Unif}[-1.10,-0.80],
	\qquad
	q_{2,0}
	\sim
	\operatorname{Unif}[1.05,1.28].
	\label{eq:app_robot_arm_initial_sampling}
\end{equation}

\textbf{Open-loop and closed-loop imitation protocols.}

\emph{Open-loop imitation.}
For each task, we sample \(N_{\mathrm{demo}}=32\) initial conditions
\(\{\xi_j\}_{j=1}^{N_{\mathrm{demo}}}\) according to the task-specific distributions in
Eqs.~(\ref{eq:app_cartpole_initial_sampling}),
(\ref{eq:app_quadrotor_initial_sampling}), and
(\ref{eq:app_robot_arm_initial_sampling}). Starting from each \(\xi_j\), the
OCP is first solved using the teacher parameter
\(\theta_{\mathrm{true}}\), producing the demonstration trajectories
\((X_j^{\mathrm{demo}},U_j^{\mathrm{demo}})\). During learning, the same OCP
is repeatedly solved from the same initial condition using the current
parameter \(\theta\), yielding
\((X^\star(\theta;\xi_j),U^\star(\theta;\xi_j))\). The parameter is then
updated by minimizing the imitation objective in
Eq.~(\ref{eq:imit_loss}) over all \(32\) initial conditions.

\emph{Closed-loop imitation.}
For closed-loop imitation, a reference rollout is first generated using
\(\theta_{\mathrm{true}}\). At each MPC instant, the OCP is solved from the
current teacher state, only the first element of the optimized control
trajectory is applied, and the system is propagated to the next state. This
procedure is repeated for \(50\) MPC steps, producing a closed-loop reference trajectory and the corresponding sequence of optimal predictions
\(\{(X_t^{\mathrm{demo}},U_t^{\mathrm{demo}})\}_{t=0}^{49}\).
The learner performs the same receding-horizon rollout using the current
parameter \(\theta\), producing
$
\left\{
\left(
X_t^\star(\theta),
U_t^\star(\theta)
\right)
\right\}_{t=0}^{49}.
$
At every MPC instant, the discrepancy between the learner and teacher
state-control predictions is evaluated using the same form as
Eq.~(\ref{eq:imit_loss}), and the losses are accumulated over the complete
rollout.

\noindent\textbf{Hyperparameter settings.}
Table~\ref{tab:ocp_training_hyperparameters} summarizes the learning and
\solver{lapanda} solver configurations. Here,
$\varepsilon_{\mathrm{in}}$ and $\varepsilon_{\mathrm{ALM}}$ are the PANDA
stationarity and ALM feasibility tolerances, while $\rho_0$ and $\tau_\rho$
are the initial penalty and its update factor. The matrix-free backward pass
uses the forward-converged penalty directly and solves the reduced system by
CG, with an automatic MINRES fallback if CG detects nonpositive curvature.

\vspace{-4mm}
\begin{table}[h]
	\centering
	\caption{
		Learning configurations and lapanda hyperparameters for
		the OCP imitation experiments.
	}
	\label{tab:ocp_training_hyperparameters}
	\setlength{\tabcolsep}{4.5pt}
	\renewcommand{\arraystretch}{1.12}
	\resizebox{\linewidth}{!}{%
		\begin{tabular}{llcccccc}
			\toprule
			Protocol
			& OCP
			& Epochs
			& Learning rate
			& $\varepsilon_{\mathrm{in}}/
			\varepsilon_{\mathrm{ALM}}$
			& $(\rho_0,\tau_\rho)$
			& Max iter.\ (PANDA/ALM)
			& Warm start
			\\
			\midrule
			
			\multirow{3}{*}{Open-loop}
			& CartPole
			& 800
			& $2\times10^{-3}$
			& $10^{-3}/10^{-3}$
			& $(10,5)$
			& $1500/20$
			& Previous epoch
			\\
			
			& Quadrotor
			& 800
			& $2\times10^{-3}$
			& $10^{-3}/10^{-3}$
			& $(10,5)$
			& $1500/20$
			& Previous epoch
			\\
			
			& Robot Arm
			& 800
			& $2\times10^{-3}$
			& $10^{-3}/10^{-3}$
			& $(10^3,5)$
			& $1500/20$
			& Previous epoch
			\\
			
			\midrule
			
			\multirow{3}{*}{Closed-loop}
			& CartPole
			& 500
			& $1\times10^{-3}$
			& $10^{-3}/10^{-3}$
			& $(10,5)$
			& $1500/20$
			& Previous MPC step
			\\
			
			& Quadrotor
			& 500
			& $1\times10^{-3}$
			& $10^{-3}/10^{-3}$
			& $(10,5)$
			& $1500/20$
			& Previous MPC step
			\\
			
			& Robot Arm
			& 500
			& $1\times10^{-3}$
			& $10^{-3}/10^{-3}$
			& $(10^3,5)$
			& $1500/20$
			& Previous MPC step
			\\
			
			\bottomrule
		\end{tabular}
	}
\end{table}

\vspace{-2mm}
The other differentiable OCP solvers use the same prediction horizon, sampling
interval, learning schedule, and stopping tolerance as the corresponding
protocol in Table~\ref{tab:ocp_training_hyperparameters}.
\vspace{-4mm}

\begin{table}[h]
	\centering
	\caption{
		Method-specific solver configurations of the differentiable OCP
		baselines.
	}
	\label{tab:ocp_baseline_solver_settings}
	\setlength{\tabcolsep}{5pt}
	\renewcommand{\arraystretch}{1.18}
	\resizebox{\linewidth}{!}{%
		\begin{tabular}{llll}
			\toprule
			Method
			& Forward pass
			& Backward pass
			& Maximum iterations
			\\
			\midrule
			
			SafePDP
			& IPOPT-based constrained OCP solve
			& Constrained auxiliary system (COC)
			& IPOPT: $3000$
			\\
			
			TurboMPC-CPU
			& SQP-ADMM with
			\texttt{admm\_jax\_loop\_pcg}
			& \texttt{admm\_jax\_loop\_pcg}
			& SQP: $50$; ADMM: $1000$
			\\
			
			TurboMPC-GPU
			& SQP-ADMM with
			\texttt{admm\_fused\_cudss}
			& \texttt{direct\_cudss\_ffi}
			& SQP: $50$; ADMM: $1000$
			\\
			
			\bottomrule
		\end{tabular}
	}
	\vspace{-2mm}
\end{table}

For \solver{SafePDP}, we evaluate its constrained auxiliary-system sensitivity
mode (\texttt{coc}) and barrier approximation with
$\gamma_{\mathrm{bar}}=10^{-2}$. The barrier variant exhibits an abrupt
initial change and a less stable imitation-loss trajectory, as shown in
Fig.~\ref{fig:safepdp_barrier_loss}; hence, the main paper reports the
\texttt{coc} results.

\begin{figure}[h]
	\centering
	\includegraphics[
	width=\linewidth,
	trim={0 0 0 0},
	clip
	]{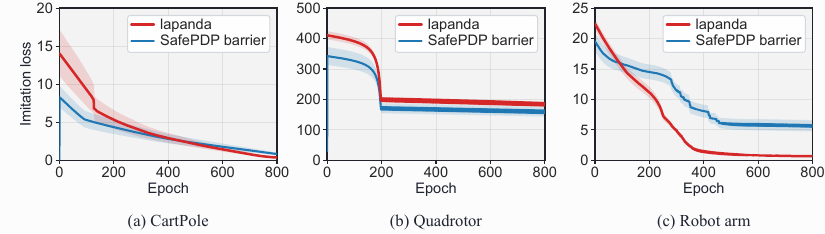}
	\vspace{-6mm}
	\caption{
		Imitation loss comparison between SafePDP-barrier and lapanda.
	}
	\vspace{-2mm}
	\label{fig:safepdp_barrier_loss}
\end{figure}

For \solver{TurboMPC}, we report the warm-started configuration
for each hardware backend as \solver{TurboMPC-CPU} and
\solver{TurboMPC-GPU}. Both use the same SQP and ADMM iteration limits and the
full Hessian, differing only in their CPU- and GPU-specific linear-algebra
backends.

\noindent\textbf{Sensitivity accuracy over learning.}
Table~\ref{tab:ocp_sensitivity_accuracy} evaluates the backward approximation
at representative learning checkpoints. At each sampled problem, we keep the
\solver{lapanda} forward solution fixed and use a high-accuracy solve of the
original KKT sensitivity system at the same point as the reference. The
reported metrics are computed from the gradients averaged over the \(32\)
open-loop scenarios or the \(50\) MPC instants of the closed-loop rollout.

\vspace{-4mm}
\begin{table}[h]
	\centering
	\caption{Sensitivity accuracy of \solver{lapanda} over the OCP imitation-learning experiments.}
	\label{tab:ocp_sensitivity_accuracy}
	\setlength{\tabcolsep}{3.2pt}
	\renewcommand{\arraystretch}{1.08}
	\resizebox{\linewidth}{!}{%
		\begin{tabular}{llcccccccc}
			\toprule
			\multirow{2}{*}{OCP}
			& \multirow{2}{*}{Metric}
			& \multicolumn{4}{c}{Open-loop epoch}
			& \multicolumn{4}{c}{Closed-loop epoch} \\
			\cmidrule(lr){3-6}
			\cmidrule(lr){7-10}
			& & \(0\) & \(200\) & \(400\) & \(800\)
			& \(0\) & \(125\) & \(250\) & \(500\) \\
			\midrule
			\multirow{2}{*}{CartPole}
			& Rel. err. (\%)
			& \(0.012\) & \(0.004\) & \(0.012\) & \(0.010\)
			& \(0.002\) & \(0.006\) & \(0.006\) & \(0.007\) \\
			& Cos. sim.
			& \(0.99999999\) & \(1.00000000\) & \(1.00000000\) & \(1.00000000\)
			& \(1.00000000\) & \(1.00000000\) & \(1.00000000\) & \(1.00000000\) \\
			\midrule
			\multirow{2}{*}{Quadrotor}
			& Rel. err. (\%)
			& \(0.048\) & \(0.286\) & \(0.363\) & \(1.306\)
			& \(0.058\) & \(0.137\) & \(0.134\) & \(0.076\) \\
			& Cos. sim.
			& \(0.99999997\) & \(0.99999999\) & \(0.99999998\) & \(0.99999982\)
			& \(0.99999992\) & \(0.99999999\) & \(1.00000000\) & \(1.00000000\) \\
			\midrule
			\multirow{2}{*}{Robot Arm}
			& Rel. err. (\%)
			& \(8.459\) & \(0.076\) & \(0.688\) & \(0.027\)
			& \(1.178\) & \(4.572\) & \(0.084\) & \(0.136\) \\
			& Cos. sim.
			& \(0.99643301\) & \(0.99999972\) & \(0.99999237\) & \(1.00000000\)
			& \(0.99993896\) & \(0.99901230\) & \(0.99999970\) & \(0.99999995\) \\
			\bottomrule
		\end{tabular}%
	}
\end{table}

Across the learning checkpoints, the relative sensitivity error is generally
small, with larger deviations confined to a few Robot Arm checkpoints with
less favorable numerical conditioning. More importantly, the cosine similarity
remains consistently above \(0.996\), indicating that the approximate gradients
preserve the reference gradient directions even when their magnitudes are less
accurate. This strong directional agreement helps explain why, in
Figs.~\ref{fig:ocp_learning_results} and
\ref{fig:closed_loop_learning_results}, \solver{lapanda} produces loss
trajectories that are nearly identical to those of the corresponding baselines,
even when using the balanced sensitivity strategy.

\noindent\textbf{Timing and memory measurement.}
For the runtime comparison, all forward and backward computation times are arithmetic means of wall-clock time over all recorded solves. For \solver{lapanda} and \solver{SafePDP}, the forward and backward procedures
can be timed separately. In contrast, \solver{TurboMPC} does not expose an
independent backward call. We therefore first
measure a forward-only solve and then measure the complete value-and-gradient
evaluation. Its backward time is estimated as
\begin{equation}
	t_{\mathrm{bwd}}^{\mathrm{TurboMPC}}
	=
	t_{\mathrm{value+grad}}^{\mathrm{TurboMPC}}
	-
	t_{\mathrm{fwd}}^{\mathrm{TurboMPC}}.
	\label{eq:app_turbompc_backward_time}
\end{equation}
In addition to the averaged runtime results, we record the forward and backward
times at each MPC instant during one representative closed-loop rollout, as
shown in Fig.~\ref{fig:closed_loop_mpc_timing}. The solvers exhibit different
runtime profiles over the rollout. In particular, the computation time of
\solver{lapanda} generally decreases after the initial MPC steps. This trend is consistent with the benefit of warm-starting: the primal variables, multipliers, penalty parameters, and shifted control trajectory obtained at the previous MPC instant provide an increasingly accurate initialization for
the subsequent problem.

\begin{figure}[h]
	\centering
	\includegraphics[
	width=\linewidth,
	trim={0 2mm 0 2mm},
	clip
	]{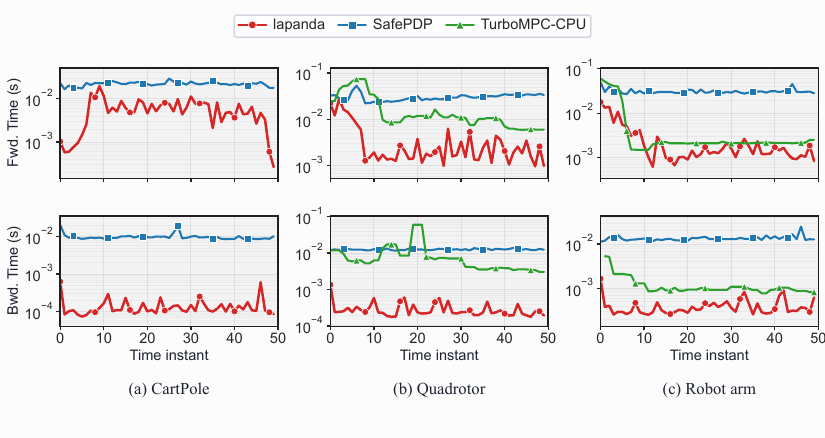}
	\vspace{-10mm}
	\caption{
		Computation times at each MPC instant during a representative closed-loop rollout.
	}
	\label{fig:closed_loop_mpc_timing}
\end{figure}

\begin{wrapfigure}{r}{0.5\linewidth}
	\centering
	\includegraphics[
	width=0.95\linewidth,
	trim={4mm 0 0 0},
	clip
	]{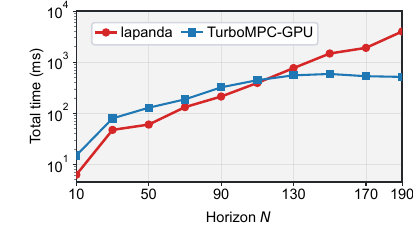}
	\vspace{-8mm}
	\caption{Mean solve time versus horizon length.}
	\label{fig:ocp_horizon_scaling}
	\vspace{-6mm}
\end{wrapfigure}
\noindent\textbf{Scaling with the horizon length.}
To evaluate solver performance across different horizon lengths $N$, we vary
$N$ up to $200$ for the Quadrotor OCP while fixing the total
prediction time at $2.5$ s. Thus, different values of $N$ correspond to
different temporal discretization resolutions of the same prediction window.
For each $N$, we report the mean total forward-and-backward time over a
30-step warm-started MPC rollout. We use \solver{TurboMPC-GPU} as a
representative GPU-based baseline.

As shown in Fig.~\ref{fig:ocp_horizon_scaling}, \solver{lapanda} is faster for smaller $N$, whereas \solver{TurboMPC-GPU} becomes more advantageous as $N$ increases. This result clarifies the intended application regime of our method: \solver{lapanda} is particularly suitable for solving complex short-horizon problems on resource-constrained platforms, while specialized GPU solvers are preferable for large-horizon problems when dedicated GPU resources are available.

\subsection{Embedded Obstacle-Avoidance Experiments}
\label{app:embedded_experiment_details}

\noindent\textbf{Embedded platform.}
The embedded experiments are conducted on an NVIDIA Jetson Orin Nano platform
equipped with a six-core Arm Cortex-A78AE v8.2 CPU, a 1024-core NVIDIA Ampere
GPU with 32 Tensor Cores, 8\,GB of 128-bit LPDDR5 memory. The reported \solver{lapanda} experiments execute the exported
standalone C solver on the Arm CPU.

\noindent\textbf{Circular-obstacle task.}
We first consider a smooth nonlinear obstacle-avoidance problem. The vehicle is
described by a kinematic bicycle model with state
$x_k=(p_{x,k},p_{y,k},\psi_k)$ and control input
$u_k=(v_k,\delta_k)$, where $(p_{x,k},p_{y,k})$ is the planar position,
$\psi_k$ is the heading angle, $v_k$ is the longitudinal velocity, and
$\delta_k$ is the steering angle. The discrete dynamics are
\begin{equation}
	\begin{aligned}
		p_{x,k+1}
		&=
		p_{x,k}
		+
		\Delta t\,v_k\cos\psi_k,
		\\
		p_{y,k+1}
		&=
		p_{y,k}
		+
		\Delta t\,v_k\sin\psi_k,
		\\
		\psi_{k+1}
		&=
		\psi_k
		+
		\frac{\Delta t}{L}
		v_k\tan\delta_k,
	\end{aligned}
	\label{eq:app_embedded_vehicle_dynamics}
\end{equation}
where horizon $N=12$, $\Delta t=0.12$, and $L=0.45$.

Let
$p_k=(p_{x,k},p_{y,k})$ denote the vehicle position. The full differentiable
problem-parameter vector is
$\theta_{\mathrm{circ}}=(q_p,q_\psi,r_v,r_\delta,q_f,r_o,c_y)$, where
$c_{\mathrm{circ}}=(0,c_y)\in\mathbb R^2$ and
$r_{\mathrm{circ}}(\theta_{\mathrm{circ}})=r_o$ denote the center and radius
of the circular obstacle, respectively. The smooth obstacle constraint is
\begin{equation}
	h_k^{\mathrm{circ}}
	\left(
	x_k;\theta_{\mathrm{circ}}
	\right)
	:=
	r_{\mathrm{circ}}
	\left(
	\theta_{\mathrm{circ}}
	\right)^2
	-
	\left\|
	p_k-c_{\mathrm{circ}}
	\right\|_2^2
	\le0.
	\label{eq:app_circle_constraint}
\end{equation}

Starting from
$x_{\mathrm{init}}=(-1.2,0,0)$, the vehicle is required to reach
$x_{\mathrm{tar}}=(1.2,0,0)$ while remaining outside the circular obstacle.
The corresponding OCP is
\begin{subequations}
	\label{eq:app_circle_ocp}
	\begin{align}
		\min_{\substack{u_{0:N-1}}}\quad
		&
		\begin{aligned}[t]
			&
			\sum_{k=0}^{N-1}
			\Big[
			q_p
			\left\|
			p_{k+1}-p_{\mathrm{tar}}
			\right\|_2^2
			+
			q_\psi
			\left(
			\psi_{k+1}-\psi_{\mathrm{tar}}
			\right)^2
			+
			r_v v_k^2
			+
			r_\delta\delta_k^2
			\Big]
			\\
			&+
			q_f
			\left[
			q_p
			\left\|
			p_N-p_{\mathrm{tar}}
			\right\|_2^2
			+
			q_\psi
			\left(
			\psi_N-\psi_{\mathrm{tar}}
			\right)^2
			\right]
		\end{aligned}
		\label{eq:app_circle_cost}
		\\
		\mathrm{s.t.}\quad
		&
		x_0=x_{\mathrm{init}},
		\label{eq:app_circle_initial}
		\\
		&
		x_{k+1}
		=
		f_{\mathrm{veh}}(x_k,u_k),
		\qquad
		k=0,\ldots,N-1,
		\label{eq:app_circle_dynamic}
		\\
		&
		-1.5\le v_k\le1.5,
		\qquad
		-25^\circ\le\delta_k\le25^\circ,
		\qquad
		k=0,\ldots,N-1,
		\label{eq:app_circle_input}
		\\
		&
		h_k^{\mathrm{circ}}
		\left(
		x_k;\theta_{\mathrm{circ}}
		\right)
		\le0,
		\qquad
		k=1,\ldots,N.
		\label{eq:app_circle_avoidance}
	\end{align}
\end{subequations}

Here, we use
$\theta_{\mathrm{circ}}=(10.0,0.2,10^{-2},10^{-2},30.0,0.30,0.20)$ and $f_{\mathrm{veh}}$ denotes the dynamics in
Eq.~(\ref{eq:app_embedded_vehicle_dynamics}). Given a teacher control trajectory $U^{\mathrm{teacher}}$, the outer learning
objective is
\begin{equation}
	\mathcal L_{\mathrm{circ}}
	\left(
	\theta_{\mathrm{circ}}
	\right)
	=
	\frac{1}{2}
	\left\|
	U^\star
	\left(
	\theta_{\mathrm{circ}}
	\right)
	-
	U^{\mathrm{teacher}}
	\right\|_2^2.
	\label{eq:app_circle_imitation_loss}
\end{equation}
For the embedded timing benchmark, we set
$U^{\mathrm{teacher}}=0$; a nonzero teacher changes the adjoint right-hand
side but not the solver configuration.
Both \solver{lapanda} and \solver{acados} successfully solve the circular-obstacle task. Figure~\ref{fig:embedded_circle_plan} shows the resulting vehicle trajectories and the corresponding obstacle clearances.
\begin{figure}[h]
	\centering
	\includegraphics[
	width=\linewidth,
	trim={0 12mm 0 6mm},
	clip
	]{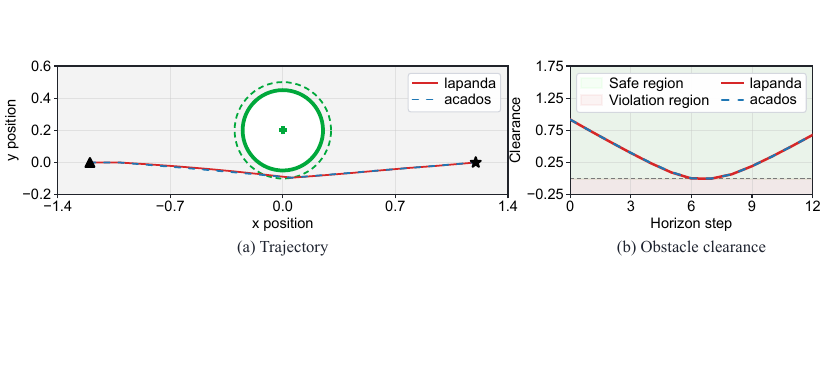}
	\vspace{-10mm}
	\caption{
		Planned trajectory for the circular-obstacle avoidance task on the
		embedded platform.
	}
	\vspace{-4mm}
	\label{fig:embedded_circle_plan}
\end{figure}

\medskip
\noindent\textbf{Rectangular-obstacle task.}
The rectangular task uses the same vehicle model, initial and target states,
and planning objective as in Eq.~(\ref{eq:app_circle_ocp}), but adopts the
horizon $N=20$, velocity bounds $-1\leq v_k\leq1$, and steering-angle bounds
$-0.7\leq\delta_k\leq0.7$~rad. For this task, the PANDA and ALM tolerances
are set to $10^{-3}$ and $10^{-4}$, respectively. Its obstacle constraint is
defined by
\begin{equation*}
	x_{\min}=-0.35-m_l,
	\qquad
	x_{\max}=0.35+m_r,
	\qquad
	y_{\min}=-0.22-m_b,
	\qquad
	y_{\max}=0.22+m_t.
\end{equation*}
A point lies outside the rectangle if at least one of the following conditions
holds:
\begin{equation}
	p_{x,k}\le x_{\min}
	\quad\vee\quad
	p_{x,k}\ge x_{\max}
	\quad\vee\quad
	p_{y,k}\le y_{\min}
	\quad\vee\quad
	p_{y,k}\ge y_{\max}.
	\label{eq:app_rectangle_disjunction}
\end{equation}
Thus, the feasible region is a nonconvex union of four half-spaces rather than
a single smooth inequality set. To encode this logical disjunction, define the
nonnegative penetration terms
\begin{equation*}
	\begin{aligned}
		s_{\mathrm{L},k}
		&=
		\max(p_{x,k}-x_{\min},0),
		&
		s_{\mathrm{R},k}
		&=
		\max(x_{\max}-p_{x,k},0),
		\\
		s_{\mathrm{B},k}
		&=
		\max(p_{y,k}-y_{\min},0),
		&
		s_{\mathrm{T},k}
		&=
		\max(y_{\max}-p_{y,k},0).
	\end{aligned}
\end{equation*}
When the vehicle is outside the rectangle, at least one of these terms is zero.
When it is strictly inside the rectangle, all four terms are positive.
Consequently, the disjunction in
Eq.~(\ref{eq:app_rectangle_disjunction}) can be represented by the
complementarity-style constraint
\begin{equation}
	\chi_{\mathrm{rect}}(x_k;\vartheta_{\mathrm{rect}})
	:=
	\frac{1}{2}
	s_{\mathrm{L},k}^2
	s_{\mathrm{R},k}^2
	s_{\mathrm{B},k}^2
	s_{\mathrm{T},k}^2
	=
	0.
	\label{eq:app_rectangle_mpcc_constraint}
\end{equation}
This product constraint is an MPCC-style representation of the logical
``or'' relation: it is zero whenever at least one separating condition is
satisfied and positive only when the vehicle lies inside the obstacle. We
therefore impose Eq.~(\ref{eq:app_rectangle_mpcc_constraint}) directly as a
nonlinear equality constraint.

The full problem-parameter vector supplied to the solver and its learnable
subset are
\begin{equation*}
	\vartheta_{\mathrm{rect}}
	=(q_p,q_\psi,r_v,r_\delta,q_f,m_l,m_r,m_b,m_t),
	\qquad
	\theta_{\mathrm{rect}}=(m_l,m_r,m_b).
\end{equation*}
We fix $(q_p,q_\psi,r_v,r_\delta,q_f)=(5.0,0.2,10^{-2},10^{-2},20.0)$
and $m_t=0.220$. Thus,
\begin{equation*}
	\vartheta_{\mathrm{rect}}(\theta_{\mathrm{rect}})
	=(5.0,0.2,10^{-2},10^{-2},20.0,m_l,m_r,m_b,0.220).
\end{equation*}
The teacher and initial full obstacle-margin vectors are
\begin{equation*}
	m^{\mathrm{teacher}}=(0.120,0.120,0.010,0.220),
	\qquad
	m^{\mathrm{init}}=(0.150,0.150,0.160,0.220).
\end{equation*}
The rectangular-obstacle imitation loss is
\begin{equation}
	\mathcal L_{\mathrm{rect}}
	\left(
	\theta_{\mathrm{rect}}
	\right)
	=
	\frac{1}{2}
	\left\|
	U^\star
	\left(
	\vartheta_{\mathrm{rect}}(\theta_{\mathrm{rect}})
	\right)
	-
	U^{\mathrm{teacher}}
	\right\|_2^2.
	\label{eq:app_rectangle_imitation_loss}
\end{equation}
\noindent\textbf{Embedded solver configuration.}
The principal solver settings used in the embedded experiments are summarized
in Table~\ref{tab:embedded_solver_settings}. The task definitions, horizon,
control bounds, obstacle geometry, and learnable parameters have been specified
above and are therefore omitted from the table.

\begin{table}[h]
	\centering
	\caption{
		Solver settings for the embedded obstacle-avoidance experiments.
	}
	\label{tab:embedded_solver_settings}
	\setlength{\tabcolsep}{3.8pt}
	\renewcommand{\arraystretch}{1.15}
	\resizebox{\linewidth}{!}{%
		\begin{tabular}{llcccc}
			\toprule
			Task
			& Method
			& Hessian treatment
			& Maximum iterations
			& Forward tol.
			& ALM settings
			\\
			\midrule
			
			\multirow{2}{*}{Circle}
			& lapanda
			& --
			& PANDA/ALM: $2000/100$
			& $10^{-1}/(2\times10^{-3})$
			& $\rho_0=10^4$, $\tau_\rho=10$
			\\
			
			& acados
			& EXACT + MIRROR
			& NLP/QP: $1000/200$
			& $2\times10^{-3}$
			& --
			\\
			
			\midrule
			
			\multirow{2}{*}{Rectangle}
			& lapanda
			& --
			& PANDA/ALM: $2000/100$
			& $10^{-3}/10^{-4}$
			& $\rho_0=10^4$, $\tau_\rho=10$
			\\
			
			& acados
			& Gauss-Newton + MIRROR
			& NLP/QP: $1000/50$
			& $10^{-4}$
			& --
			\\
			
			\bottomrule
		\end{tabular}
	}
\end{table}

For the rectangular task, we tested Gauss-Newton and exact-Hessian models in
\solver{acados}, enabled \texttt{MIRROR} regularization, increased the NLP
iteration limit to $1000$, and used a separate sensitivity-enabled solver.
Nevertheless, \solver{acados} repeatedly returned a failure status and the imitation loss remained nearly unchanged. This behavior is consistent with the local degeneracy of Eq.~(\ref{eq:app_rectangle_mpcc_constraint}). This experiment should thus be interpreted as an empirical stress test outside the regularity assumptions of our local theory.

\medskip
\noindent\textbf{Smoothed rectangular-obstacle benchmark.}
To complement the MPCC experiment and provide a more comprehensive evaluation of \solver{lapanda}, we additionally consider a smoothed version of the rectangular obstacle-avoidance problem. Specifically, we replace the disjunction by the conservative smooth maximum
\begin{equation}
	\widetilde\chi_{\tau_{\mathrm{sm}}}(p_k)
	=
	\tau_{\mathrm{sm}}\log\!\left(
	\frac{1}{4}\sum_{i=1}^{4}
	\exp\!\left(\frac{d_i(p_k)}{\tau_{\mathrm{sm}}}\right)
	\right)
	\ge 0,
	\label{eq:app_smoothed_rectangle_constraint}
\end{equation}
where
$
d(p_k)=
(x_{\min}-p_{x,k},\,
p_{x,k}-x_{\max},\,
y_{\min}-p_{y,k},\,
p_{y,k}-y_{\max}).
$
We use \(\tau_{\mathrm{sm}}=0.05\), which rounds the four corners while conservatively
preserving the obstacle. Since \solver{acados} still fails to complete the task
from a zero-control initialization, we manually construct a feasible nonzero
trajectory for its initial solve and shift the resulting solution in subsequent
MPC steps. As shown in Fig.~\ref{fig:app_smoothed_mpcc}, both
\solver{acados} and \solver{lapanda} successfully avoid the smoothed obstacle,
and their computation times become comparable during the closed-loop rollout.
In particular, the computation time of \solver{lapanda} progressively decreases
as the shifted warm start becomes effective and eventually stabilizes at a
similar millisecond-scale level. These results demonstrate the competitive computational performance of \solver{lapanda} on the smoothed obstacle-avoidance problem.

\vspace{-4mm}
\begin{figure}[h]
	\centering
	\includegraphics[
	width=\linewidth,
	trim={0 16mm 0 4mm},
	clip
	]{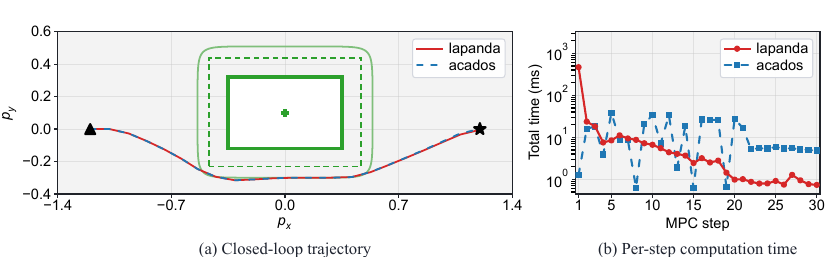}
	\vspace{-6mm}
	\caption{Solution trajectories and computation times for the smoothed rectangular-obstacle task.}
	\label{fig:app_smoothed_mpcc}
\end{figure}
\vspace{-4mm}
\clearpage
\section{Implementation Details}
\label{app:implementation}

This section briefly describes the software engineering design of
\solver{lapanda}, including its multi-platform implementation, computational
optimizations, and user-oriented interfaces.

\medskip
\begin{wrapfigure}{r}{0.52\linewidth}
	\centering
	\vspace{-5mm}
	\includegraphics[
	width=\linewidth,
	trim={0 0 0 0},
	clip
	]{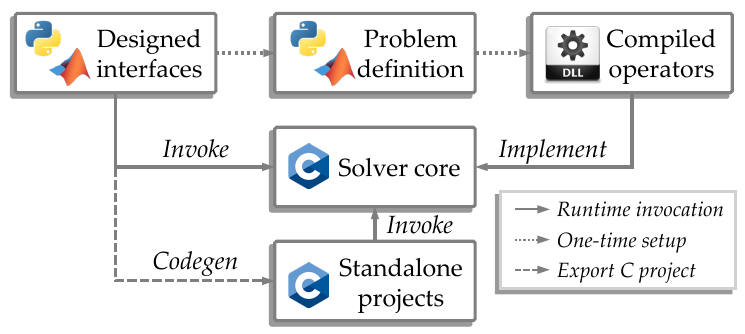}
	\renewcommand{\figurename}{Fig.}
	\vspace{-4mm}
	\caption{
		Software architecture and multi-platform workflow of
		lapanda.
	}
	\vspace{-2mm}
	\label{fig:implementation_architecture}
\end{wrapfigure}
\noindent\textbf{Multi-platform support.}
As illustrated in Fig.~\ref{fig:implementation_architecture}, the core
algorithms of \solver{lapanda} are implemented in C to efficiently execute the
computationally intensive optimization and sensitivity routines. Python and
MATLAB interfaces are provided through \solver{Pybind11} and \solver{MEX},
respectively. On both platforms, users define the parametric optimization
problem using \solver{CasADi}, while the interfaces generate the
problem-dependent operators and invoke the C backend for forward and backward
computation.

In addition, we provide an export utility on the Python platform that
automatically packages a specified problem into a minimal standalone CMake
project. This enables the same
optimization problem to be deployed on embedded C platforms.

\textbf{Engineering optimizations.}
The implementation incorporates two main engineering optimizations. First, once a solve is initiated, the ALM outer iterations, \solver{PANDA} inner iterations, matrix-free operator evaluations, and backward sensitivity iterations are carried out in the C backend. The high-level interfaces mainly handle problem definition, solver configuration, data transfer, and function invocation, thereby reducing repeated language-boundary communication.

Second, we reuse the problem-dependent operators across repeated solver calls. A considerable one-time cost arises from constructing the \solver{CasADi} computational graph and generating the corresponding objective, constraint, and derivative operators. Once the problem structure is fixed, subsequent solves require only evaluations of these pre-generated operators with updated states, parameters, and warm-start variables. We therefore compile the generated operators into a shared library that can be loaded and reused by the C solver, avoiding repeated graph construction and code generation during learning or MPC rollouts and reducing solver initialization time and memory overhead.

\noindent\textbf{User-oriented interfaces.}
The Python and MATLAB interfaces hide operator generation, compilation, and
C-backend configuration: users define the parametric problem and numerical
inputs, and the solver returns \(u^\star\) and its gradient with respect to
\(\theta\), while \texttt{variable} denotes inputs excluded from
differentiation. Representative workflows are shown below.

\begin{lstlisting}[
	style=almPython,
	caption={Python usage.},
	label={lst:python_interface}
	]
	# 1. Define a parametric optimization problem
	u = ca.SX.sym("u", n_u)
	theta = ca.SX.sym("theta", n_theta)
	variable = ca.SX.sym("variable", n_v)
	
	problem = CasadiProblem(
		u=u, theta=theta, variable=variable,
		cost=..., constraints=..., outer_loss=...)
	
	# 2. Generate operators and build the solver
	solver = build_solver(problem, name="demo_problem")
	
	# 3. Solve and differentiate
	result = solver.solve_alm(
		x0=x0, theta=theta_value, variable=variable_value,
		constraint_lower=..., constraint_upper=...)
		
	u_star = result["solution"]
	grad_theta = result["grad_theta"]
\end{lstlisting}

The MATLAB interface follows a similar workflow: the user defines the \solver{CasADi}
problem, creates the generated MEX solver, and invokes the forward and backward
computations.

\begin{lstlisting}[
	style=almMatlab,
	caption={MATLAB usage.},
	label={lst:matlab_interface}
	]
	% 1. Define a parametric optimization problem
	u = SX.sym('u', n_u, 1);
	theta = SX.sym('theta', n_theta, 1);
	variable = SX.sym('variable', n_v, 1);
	
	problem.u = u;
	problem.theta = theta;
	problem.variable = variable;
	problem.cost = ...;
	problem.constraints = ...;
	problem.outer_loss = ...;
	
	% 2. Generate operators and build the solver
	solver = lapanda_create_solver( ...
		problem, output_directory, 'demo_problem');
	
	% 3. Solve and differentiate
	result = solver.solve_alm( ...
		x0, theta_value, variable_value, ...
		constraint_lower, constraint_upper);
		
	u_star = result.solution;
	grad_theta = result.grad_theta;
\end{lstlisting}
\vspace{-2mm}
For embedded deployment, the Python interface can export a standalone CMake
project containing the required problem-dependent operators, solver source
files, and build scripts. Users can then add application-specific logic to the
exported project, compile it, and run the resulting executable.

\begin{lstlisting}[
	style=almPython,
	caption={Exporting a problem for standalone C deployment.},
	label={lst:c_export}
	]
	from lapanda import export_c_project
	export_c_project(
		problem,
		out_dir="exports/demo_problem",
		name="demo_problem",
	)
\end{lstlisting}
\vspace{-2mm}
\begin{lstlisting}[
	style=almC,
	caption={Standalone C usage.},
	label={lst:standalone_c}
	]
	#include "static_casadi_oracle.h"
	#include "lapanda_generated_config.h"
	int main(void) {
		alm_problem problem;
		double theta[LAPANDA_NTHETA] = {...}, variable[LAPANDA_NVAR] = {...};
		double u[LAPANDA_N] = {...}, grad_theta[LAPANDA_NTHETA];
		...
		
		// Initialize the generated operators with backward enabled.
		backward_params.enable = 1;
		lapanda_static_init_alm_problem(&problem, constraint_lower, constraint_upper,
			&solver_params, &backward_params);
			
		// The solution and gradient are returned in u and grad_theta.
		alm_solve_with_backward(&problem, &alm_params, u,
			multipliers, theta, variable, &info,
			grad_theta, &backward_info);
		return 0;
	}
\end{lstlisting}

\end{document}